\newtheorem{theorem}{Theorem}[section]
\newtheorem{proposition}[theorem]{Proposition}
\newtheorem{corollary}[theorem]{Corollary}
 \theoremstyle{definition}
\newtheorem{definition}[theorem]{Definition}
\newtheorem{condition}{Condition}
\theoremstyle{remark}
\newtheorem{remark}[theorem]{Remark}
\numberwithin{equation}{section}
\begin{document}

\title[Neumann problem]
{The Neumann problem for a class of Hessian quotient type equations }

\author{Jiabao Gong}
\address{Faculty of Mathematics and Statistics, Hubei Key Laboratory of Applied Mathematics, Hubei University,  Wuhan 430062, P.R. China}
\email{202321104011284@stu.hubu.edu.cn}

\author{Zixuan Liu}
\address{Faculty of Mathematics and Statistics, Hubei Key Laboratory of Applied Mathematics, Hubei University,  Wuhan 430062, P.R. China}
\email{202221104011303@stu.hubu.edu.cn}

\author{Qiang Tu$^{\ast}$}
\address{Faculty of Mathematics and Statistics, Hubei Key Laboratory of Applied Mathematics, Hubei University,  Wuhan 430062, P.R. China}
\email{qiangtu@hubu.edu.cn}

\keywords{Hessian quotient equation; Neumann problem; a priori estimates.}

\subjclass[2010]{Primary 35J15; Secondary 35B45.}
\thanks{This research was supported by funds from Natural Science Foundation of Hubei Province, China , No. 2023AFB730 and the National Natural Science Foundation of China No. 12101206.}
\thanks{$\ast$ Corresponding author}

\begin{abstract}
In this paper, we consider the Neumann problem for a class of Hessian quotient equations  involving a gradient term on the right-hand side in Euclidean space.
More precisely, we derive the interior gradient estimates  for the $(\Lambda, k)$-convex solution of Hessian quotient equation $\frac{\sigma_k(\Lambda(D^2 u))}{\sigma_l(\Lambda(D^2 u))}=\psi(x,u,D u)$ with $0\leq l<k\leq C^{p-1}_{n-1}$ under the assumption of the  growth condition.
 As an application,  we  obtain the global a priori estimates  and the existence theorem for the Neumann problem of  this Hessian quotient type equation.
\end{abstract}

\maketitle
\section{Introduction}

Let $\Omega\subset\mathbb{R}^n$ be a domain with $C^4$ boundary, $u$ be a $C^2$ function on $\Omega$ and $\lambda=(\lambda_1,\cdots,\lambda_n)$ be the eigenvalues of the Hessian matrix $D^2u$.
Given an integer $p$ with $1\leq p\leq n-1$, set
$$\mathfrak{J}=\{(i_1,\cdots,i_p)|1\leq i_1<\cdots<i_p\leq n\}.$$
For any $I=(i_1,\cdots,i_p)\in \mathfrak{J}$,  we define
$$
\Lambda_I(D^2 u)=\lambda_{i_1}+\cdots+\lambda_{i_p}.
$$
For convenience, fix an order for the elements in $\mathfrak{J}$:
\begin{eqnarray*}
I_1,\cdots,I_N,~~\mbox{with}~~N=C^p_n=\frac{n!}{p!(n-p)!},
\end{eqnarray*}
and write
$$\Lambda(D^2u)=(\Lambda_{I_1}(D^2u),\cdots,\Lambda_{I_N}(D^2u))=(\Lambda_1(D^2u),\cdots,\Lambda_N(D^2u)).$$
In this paper, we consider the following Hessian quotient type equation
\begin{eqnarray}\label{(1.1)}
\frac{\sigma_k(\Lambda(D^2 u))}{\sigma_l(\Lambda(D^2 u))}=\psi(x,u,D u),\quad x\in \Omega,
\end{eqnarray}
where $\sigma_k$ is the $k$-th elementary symmetric function and $0\leq l<k\leq N$, $\psi$ is a given function. To ensure the ellipticity of equation \eqref{(1.1)}, the admissible set  is defined as follows.
\begin{definition}\label{def-1}
A function $u\in C^2(\Omega)$ is called $(\Lambda, k)$-convex if $\Lambda(D^2 u) \in \tilde{\Gamma}_k$ for any $x\in \Omega$,
where $\tilde{\Gamma}_k$ is the Garding cone
\begin{eqnarray*}\label{cone}
\tilde{\Gamma}_{k}=\{\lambda \in \mathbb{R}^N: \sigma_{j}(\lambda)>0,~ \forall ~ 1\leq j \leq k\}.
\end{eqnarray*}
\end{definition}

If   $p=1$,  equation \eqref{(1.1)} becomes the classic  Hessian quotient equation
$$\frac{\sigma_k(D^2u)}{\sigma_l(D^2 u)}=\psi(x, u,D u),$$
 and the corresponding Dirichlet problem and Neumann problem have been studied extensively in the past decades. When the right-hand side function depends only on $x$, the Dirichlet problem for Hessian equations was considered by  Caffarelli-Nirenberg-Spruck  \cite{CNS85}, where they
treated a general class of fully nonlinear equations under conditions on
the geometry of $\partial \Omega$. Then the same results for the Dirichlet problem of Hessian quotient equations have been established by Trudinger in \cite{Tr95}. Guan \cite{Guan12} solved the Dirichlet problem in a domain with no geometric restrictions to the boundary under essentially optimal structure conditions.
Trudinger has solved the Neumann problem for Hessian equations when the domain is a ball in \cite{TR87}. At the end of his paper, Trudinger conjectured that  a priori estimates and the existence result for the Neumann problem of Hessian equations still  hold in  sufficiently smooth uniformly convex domains. Recently, Ma-Qiu \cite{MQ19} gave a positive answer to this problem and
solved the Neumann problem of k-Hessian equations in strictly convex domains. Chen-Zhang \cite{CZ21} generalized the result to the Neumann problem of Hessian quotient equations.
When the right-hand side function depends  on $x, u$ and $Du$, the interior gradient estimates  of Hessian quotient equations has been established by Chen \cite{Chen15}. Under the convexity assumption of  $\psi$ on $\nabla u$, Guan-Jiao \cite{Guan15, GBH2016} obtained the $C^2$ estimates for  Hessian quotient equations and the corresponding Dirichlet problem has been considered.
In \cite{D}, Dong studied the interior and boundary gradient estimates for solutions to the Neumann problem of Hessian quotient equations on Riemannian manifolds.

If $p=n-1$, equation \eqref{(1.1)} is known as the following  Hessian quotient type equation
\begin{eqnarray}\label{(1.2)}
\frac{\sigma_k(U[u])}{\sigma_l(U[u])}=\psi(x, u, D u),
\end{eqnarray}
where $U[u]=\Delta uI-\nabla^2 u$, which has attracted the attention of many authors due to its geometric applications such as the Gauduchon conjecture in complex geometry. In \cite{Chu20}, Chu-Jiao proved Pogorelov type estimates of solutions to Hessian type equation \eqref{(1.2)} with $l=0$. Chen-Tu-Xiang \cite{CTX2} generalized the result to equation \eqref{(1.2)} in case $0\leq l<k-1$. In \cite{Mei}, Mei established the interior $C^2$ estimates for equation \eqref{(1.2)}  when $k \leq n-1$ and constructed the non-classical solutions when $k=n$. Then the corresponding Dirichlet problem for  equation \eqref{(1.2)} has been studied by Chen-Tu-Xiang \cite{CTX3} on Riemannian manifolds. When the right-hand side function depends only on $x$,
 Chen-Dong-Han \cite{CDH}  derived an interior  priori Hessian estimates for equation \eqref{(1.2)} in case $0\leq l<k<n$. As an application, they also considered  Liouville theorem and  the Neumann problem for equation \eqref{(1.2)}. Dong-wei \cite{DW-22} studied the Neumann problem of Hessian quotient equations in the complex domain.

For $2\leq p \leq n-1$, there were also many research results for equation \eqref{(1.1)}.   When $l=0$  and $k = 1$, equation \eqref{(1.1)} becomes semilinear elliptic partial differential equation, which was studied by Han-Ma-Wu \cite{HMW2011}, and an existence theorem of strictly $p$-convex starshaped hypersurface for the corresponding curvature equation was obtained. If $l= 0$ and $k = N$, equation \eqref{(1.1)} becomes the following $p$-Monge-Amp\`re type  equation
\begin{equation}\label{1.5}
\Pi_{1\leq i_1<\cdots<i_p\leq n} \left(\lambda_{i_1}+\cdots+\lambda_{i_p}\right) =\psi(x, u, Du)
\end{equation}
Dinew \cite{Dinew} studied  equation \eqref{1.5} in case $\psi=\psi(x,u)$, and obtained  first and second order interior estimates for $p$-plurisubharmonic solutions. Chu-Dinew \cite{CD2023}
studied a general class of Hessian elliptic equations, including  $p$-Monge-Amp\`ere equation, and proved Liouville theorem under new additional conditions. The corresponding curvature equations have been studied by Dong in \cite{Dong22, Dong24}.
Recently, Zhou \cite{ZJ} considered the following Hessian quotient type curvature equation
$$\frac{\sigma_k(\Lambda(\kappa))}{\sigma_l(\Lambda(\kappa))}=\psi(X, \nu),$$
where $X=(x, u(x)) \in B_r\times \mathbb{R}$, $\nu$ is
the outer unit normal and $\kappa$ is the principle curvatures of an isometrically immersed hypersurface $M$ in Euclidean space $\mathbb{R}^{n+1}$.
They established interior curvature estimates under the condition of $0\leq l<k\leq C_{n-1}^{p-1}$, and proved Bernstein  theorem for this  curvature type equation.

From analysis point of view, a natural problem is whether we can establish  a priori interior  estimates for  equation \eqref{(1.1)} and apply these estimates to the Neumann problem.

In order to obtain the interior gradient estimates of the solution for equation \eqref{(1.1)}, we introduce the following growth condition which can refer to Dong's works in \cite{D}.

\begin{condition}\label{growth}
There exists  positive constants $C_1$ and $M_1>1$ such that for any $(x, z, p)\in \Omega\times \mathbb{R}\times \mathbb{R}^n$ with $|p|>M_1$, $\tilde\psi:=\psi^{\frac{1}{k-l}}$ satisfies
\begin{eqnarray}\label{(s1)}
|{\tilde\psi}_{x}|+|{\tilde\psi}_z||p|+|{\tilde\psi}_{p}|p^2 \leq C_1|p|^{2+\gamma}.
\end{eqnarray}
for a constant $\gamma < 1$.
\end{condition}

Then  a priori interior gradient estimates for the $(\Lambda, k)$-convex solution of equation \eqref{(1.1)} is as follows.
\begin{theorem}\label{C11}
Let $0\leq l<k\leq C^{p-1}_{n-1}$ and $u\in C^3(B_r(0))$ be the $(\Lambda,k)$-convex solution of equation \eqref{(1.1)} in $B_r(0)$. Suppose that $\tilde{\psi}\in C^1(B_r\times\mathbb{R}\times\mathbb{R}^n)$ with $\tilde{\psi}>0$ satisfy the  \textbf{Condition \ref{growth}}. Then
\begin{eqnarray}\label{(1.2.1)}
|Du(0)|&\leq C \bigg(\frac{[\underset{B_r(0)}{osc} u]}{r} + [\underset {B_r(0)}{osc} u]^{\frac{2}{1-\gamma}}+[\underset{B_r(0)}{osc}u]^{\frac{1}{1-\gamma}} \bigg),
\end{eqnarray}
where $C$ is a positive constant depending on $n, k, l, p, M_1$ and $C_1$.
\end{theorem}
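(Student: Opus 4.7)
The argument follows the standard maximum-principle test-function scheme for interior gradient estimates, adapted to the $(\Lambda,k)$-convex framework. Normalize $w = u - \inf_{B_r(0)} u$, so that $0\leq w \leq M := \underset{B_r(0)}{osc}\, u$, and consider the auxiliary function
\[
G(x) \;=\; \eta(x)^{\beta}\,\phi(w(x))\,|Dw(x)|^{2}, \qquad \eta(x) = r^{2}-|x|^{2},
\]
where $\beta>0$ will be chosen large and $\phi(w) = e^{-\alpha w}$ with $\alpha=\alpha(M)$ adjusted so that $\phi$ and $\phi'/\phi$ are comparable to positive constants depending only on $M$. It suffices to control $G$ at any interior maximum $x_{0}\in B_{r}(0)$, since the inequality $G(0)\leq G(x_{0})$ then transfers the bound to $|Du(0)|$ after balancing the weights in $\eta^{\beta}$ and $\phi$.

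\textbf{Linearization, critical point, and main identity.} Write $F := (\sigma_{k}/\sigma_{l})^{1/(k-l)}$, so \eqref{(1.1)} reads $F(\Lambda(D^{2}w)) = \tilde\psi$, and let $L = F^{ij}\partial_{ij}$ be its linearization, where $F^{ij} = \sum_{I\in\mathfrak{J}}(\partial F/\partial \Lambda_{I})(\partial \Lambda_{I}/\partial u_{ij})$. The operator $F$ is concave on $\tilde\Gamma_{k}$, $F^{ij}$ is positive definite, and the hypothesis $k\leq C^{p-1}_{n-1}$ combined with the fact that each index $i$ appears in exactly $C^{p-1}_{n-1}$ multi-indices of $\mathfrak J$ provides, via Newton--Maclaurin type inequalities on $\tilde\Gamma_{k}$, a uniform positive lower bound on $\sum_{i}F^{ii}$ and on $F^{nn}/\sum_i F^{ii}$ after rotating so that $Dw(x_{0}) = |Dw(x_{0})|e_{n}$. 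The identity $D\log G(x_{0}) = 0$ yields
\[
(|Dw|^{2})_{l}/|Dw|^{2} + (\phi'/\phi)w_{l} + \beta\,\eta_{l}/\eta \;=\; 0,\qquad l=1,\dots,n,
\]
pinning down the mixed Hessian entries $w_{nl}(x_{0})$. Substituting the once-differentiated equation $F^{ij}w_{ijk} = \tilde\psi_{x_{k}} + \tilde\psi_{z}w_{k} + \tilde\psi_{p_{l}}w_{kl}$ into $F^{ij}(|Dw|^{2})_{ij} = 2F^{ij}w_{ki}w_{kj} + 2 w_{k}F^{ij}w_{ijk}$ and using $L\log G(x_{0})\leq 0$, the positive terms $2F^{ij}w_{ki}w_{kj}/|Dw|^{2}$ and $(\phi''/\phi-(\phi'/\phi)^{2})F^{ij}w_{i}w_{j}$, combined via $F^{nn}\geq c\sum_i F^{ii}$, yield a dominant contribution of size $c|Dw(x_{0})|^{2}\sum_{i}F^{ii}$, against which the $\beta L\eta/\eta$ cutoff term contributes only $-C\beta(1+1/\eta(x_{0})^{2})\sum_{i}F^{ii}$.

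\textbf{Error absorption and conclusion.} Condition~\ref{growth} yields $|\tilde\psi_{x}|\leq C|Dw|^{2+\gamma}$, $|\tilde\psi_{z}|\leq C|Dw|^{1+\gamma}$, $|\tilde\psi_{p}|\leq C|Dw|^{\gamma}$ whenever $|Dw|>M_{1}$, so the terms $2w_{k}F^{ij}w_{ijk}/|Dw|^{2}$ introduced by the right-hand side contribute errors of order $C|Dw|^{\gamma}$, $C|Dw|^{1+\gamma}$, and (after using the critical-point identity to replace $w_{k}w_{kl}$ by $-\tfrac{1}{2}|Dw|^{2}((\phi'/\phi)w_{l}+\beta\eta_{l}/\eta)$) a mixed term $\leq C|Dw|^{\gamma}(|Dw|^{2}+|Dw|/\eta)$. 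Since $\gamma<1$, every such error power of $|Dw(x_{0})|$ can be absorbed into the main positive term $c|Dw(x_{0})|^{2}\sum_{i}F^{ii}$, leading to an inequality of the schematic form
\[
|Dw(x_{0})|^{2} \;\leq\; C\left(\eta(x_{0})^{-2} + M^{2}\,|Dw(x_{0})|^{1+\gamma} + |Dw(x_{0})|^{1+\gamma}\right),
\]
whose three regimes (cutoff, $M$-dependent, and $M$-free) produce respectively $M/r$, $M^{2/(1-\gamma)}$, and $M^{1/(1-\gamma)}$ in \eqref{(1.2.1)} after comparing $G(0)\leq G(x_{0})$ and selecting $\beta$. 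The \emph{main technical obstacle} is the folded $\Lambda$-structure of $F^{ij}$: since $F^{ii}=\sum_{I\ni i}\partial F/\partial \Lambda_{I}$ rather than $\partial F/\partial \lambda_{i}$, the inequalities $F^{nn}\geq c\sum_i F^{ii}$ and $F^{ij}w_{ki}w_{kj}\geq c|Dw|^{2}\sum_i F^{ii}$ must be re-derived via generalized Newton--Maclaurin and Garding-type inequalities on $\tilde\Gamma_{k}$; this is exactly where the combinatorial hypothesis $k\leq C^{p-1}_{n-1}$ enters.
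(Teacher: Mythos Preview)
Your overall architecture---test function, maximum principle, critical-point substitution, and the structural inequality $F^{ii}\geq C_p\sum_jF^{jj}$ supplied by the hypothesis $k\leq C^{p-1}_{n-1}$---matches the paper's. But the specific choice $\phi(w)=e^{-\alpha w}$ breaks the argument at the step where you claim a ``dominant contribution of size $c|Dw(x_0)|^{2}\sum_iF^{ii}$.'' For the exponential, $(\log\phi)''=\phi''/\phi-(\phi'/\phi)^{2}=0$, so that term contributes nothing; and once you also subtract the negative piece $F^{ij}(|Dw|^{2})_{i}(|Dw|^{2})_{j}/|Dw|^{4}$ coming from $\log|Dw|^{2}$ (which you have not accounted for), the effective coefficient of $F^{ij}w_{i}w_{j}$ becomes $\phi''/\phi-2(\phi'/\phi)^{2}=-\alpha^{2}<0$. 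Your fallback claim $F^{ij}w_{ki}w_{kj}\geq c|Dw|^{2}\sum_iF^{ii}$ is not a valid inequality in general: in an eigenframe this is $\sum_iF^{ii}\lambda_i^{2}$, which carries no a~priori lower bound of the form $|Dw|^{2}\sum_iF^{ii}$. So the main positive term never materializes.

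The paper closes exactly this gap by taking $\eta(u)=(M-u)^{-1/3}$ (with $M=3\,\mathrm{osc}\,u+\sup u$) in place of your exponential. After diagonalizing $D^{2}u$ at $x_0$ and using the critical-point identity to absorb the $|Du|$-quadratic into the $\eta$-block, the coefficient in front of $F^{ij}u_iu_j$ becomes $\eta''/\eta-3(\eta')^{2}/\eta^{2}=\tfrac{1}{9}(M-u)^{-2}>0$, which together with $F^{ii}\geq C_p\sum_jF^{jj}$ yields the genuine good term $\gtrsim C_pT^{-2}|Du|^{2}\sum_iF^{ii}$ (here $T=\mathrm{osc}\,u$). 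The exponent $-1/3$ is not incidental: any $\eta=(M-u)^{-a}$ with $0<a<1/2$ makes $\eta''/\eta-3(\eta')^{2}/\eta^{2}=a(1-2a)(M-u)^{-2}>0$, whereas the exponential sits exactly at the degenerate boundary. Replace your $\phi$ by such a power and the rest of your outline (growth-condition error control, transfer via $G(0)\leq G(x_0)$) goes through essentially as written; the rotation convention (aligning $Du$ with $e_n$ versus diagonalizing $D^{2}u$) is immaterial since the key lower bound on $F^{ii}$ holds for every index.
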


\begin{remark}
When $\psi$ depends only on $x$ and $u$, we can obtain the following estimate
\begin{eqnarray*}
|Du(0)|&\leq C(\frac{\underset{B_r(0)}{osc} u}{r}+\underset{B_r(0)}{osc} u+{[\underset{B_r(0)}{osc} u]}^{\frac{2}{3}})
\end{eqnarray*}
without  \textbf{Condition \ref{growth}}. Here $C$ is a positive constant depending on $k, l, n, p$ and $|\psi|_{C^1}$.
\end{remark}

Next we consider the the Neumann problem of equation \eqref{(1.1)}. Under some assumptions for $\psi$ and $\varphi$, we solve the
existence  for the Neumann problem, based on  a priori estimates for the solution to  equation  \eqref{(1.1)}. The main theorem is as follows.

\begin{theorem}\label{T1}
Let $0\leq l<k\leq C^{p-1}_{n-1}$ and $\Omega\subset\mathbb{R}^n$ be a  domain with $C^4$ boundary. Assuming that $\psi\in C^3(\Omega\times\mathbb{R}\times\mathbb{R}^n)>0$ satisfy the \textbf{Condition \ref{growth}}
and $\varphi\in C^3(\partial \Omega\times \mathbb{R})$.
Suppose that there exists positive constants $c_0, \alpha_0$ such that $-\varphi_u\geq c_0>0$ and  $\tilde{\psi}^{-1}_u\leq -\alpha_0<0$,
then there exists a unique $(\Lambda,k)$-convex solution $u \in C^{3,\alpha}(\bar \Omega) $ for the following Neumann problem
\begin{eqnarray}\label{(1.4)}
\left\{\begin{matrix}
\frac{\sigma_k(\Lambda(D^2 u))}{\sigma_l(\Lambda(D^2 u))}=\psi (x,u,D u) , & in~~ \Omega \\
 u_\nu =\varphi (x,u), & on~~\partial \Omega
\end{matrix}\right.
\end{eqnarray}
where $\nu$ is the unit outer normal vector of $\partial \Omega$.
\end{theorem}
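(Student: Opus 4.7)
The plan is to establish Theorem \ref{T1} by the standard continuity method, reducing the existence assertion to the derivation of global a priori $C^{2,\alpha}(\bar\Omega)$ estimates for the $(\Lambda,k)$-convex solution of the family
\begin{equation*}
\frac{\sigma_k(\Lambda(D^2 u^t))}{\sigma_l(\Lambda(D^2 u^t))}=\psi_t(x,u^t,Du^t),\qquad u^t_\nu=\varphi_t(x,u^t),
\end{equation*}
connecting a trivial solvable problem at $t=0$ to problem \eqref{(1.4)} at $t=1$. With the $C^{2,\alpha}$ estimate independent of $t$ and the linearised operator invertible (thanks to $-\varphi_u\ge c_0$ and $\tilde\psi_u^{-1}\le-\alpha_0$, which make the linearisation satisfy an oblique-derivative maximum principle), openness follows from the implicit function theorem and closedness from Arzel\`a--Ascoli, giving existence of $u\in C^{3,\alpha}(\bar\Omega)$ after a Schauder bootstrap. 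Uniqueness is immediate from the same maximum principle applied to the difference of two solutions.

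First I would derive the $C^0$ bound. An upper bound for $u$ comes from testing $\psi(x,u,0)=\sigma_k/\sigma_l|_{D^2u=0}$ at an interior maximum together with the strict monotonicity $\tilde\psi_u^{-1}\le-\alpha_0$, which forces $u(x_0)\le C$; a lower bound is obtained at a boundary minimum by using the Neumann condition and $-\varphi_u\ge c_0$ in the usual way. The $C^1$ estimate splits naturally into an interior part, which is exactly Theorem \ref{C11} applied at each interior ball (the osc $u$ bound is already in hand), and a boundary part, which I would get by the Lieberman-type barrier construction: the tangential gradient is controlled by $|\varphi|_{C^1}$ via differentiating the boundary condition along $\partial\Omega$, and the normal derivative is directly $\varphi(x,u)$, so the question reduces to controlling $|Du|$ on $\partial\Omega$ by the interior bound through a standard comparison function using the signed distance function.

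The principal obstacle will be the global $C^2$ estimate. Since the operator $F=(\sigma_k/\sigma_l)^{1/(k-l)}$ evaluated on $\Lambda(D^2u)$ is concave in $D^2u$ on $\tilde\Gamma_k$, the interior second derivative estimate reduces to a boundary estimate, as in Ma--Qiu \cite{MQ19} and Chen--Zhang \cite{CZ21}. For the boundary $C^2$ estimate I would adapt the Ma--Qiu strategy: choose coordinates adapted to $\partial\Omega$ so that $\nu=e_n$, differentiate the Neumann condition $u_\nu=\varphi$ twice tangentially to get $u_{\alpha\beta}$ ($\alpha,\beta<n$) in terms of known data, control the mixed terms $u_{\alpha n}$ by an auxiliary function of the form $w=\pm u_\xi(x)+\eta(u_\nu-\varphi)+A|x|^2+B\,\mathrm{dist}(x,\partial\Omega)$, and finally establish the double normal bound $u_{nn}$ from the equation once the other second derivatives are known, using that the trace $\sigma_1$ of the Hessian on $\tilde\Gamma_k$ is controlled from below. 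The extension from the $p=1,n-1$ settings to general $2\le p\le n-1$ is the new technical point; the constraint $k\le C^{p-1}_{n-1}$ is precisely what is needed to keep the sums $\Lambda_I$ controlled by individual eigenvalues in the relevant Maclaurin-type inequalities, exactly as exploited for the interior gradient estimate.

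Once $\|u\|_{C^2(\bar\Omega)}\le C$ uniformly along the continuity path, the eigenvalues of $D^2u$ stay in a fixed compact subset of $\tilde\Gamma_k$, so the equation is uniformly elliptic with concave operator $F$. The Evans--Krylov and Lieberman--Trudinger boundary estimates for oblique problems then yield $\|u\|_{C^{2,\alpha}(\bar\Omega)}\le C$, and linear Schauder theory applied to the differentiated equation improves this to $C^{3,\alpha}$. Plugging the estimate into the continuity argument outlined above closes the proof.
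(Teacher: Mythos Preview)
Your overall scheme---continuity method reduced to uniform $C^{2,\alpha}$ a priori estimates, then Evans--Krylov and Schauder---matches the paper exactly. However, two of your proposed mechanisms are incorrect as written.

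For the $C^0$ bound you have the roles reversed. Since $\Lambda(D^2u)\in\tilde\Gamma_k\subset\tilde\Gamma_1$ forces $\Delta u>0$, a $(\Lambda,k)$-convex function has \emph{no} interior maximum; the upper bound must come from a boundary maximum, where the Neumann condition together with $-\varphi_u\ge c_0$ does the work. Conversely, $u$ can perfectly well have an interior minimum, so ``boundary minimum via the Neumann condition'' is insufficient for the lower bound. The paper handles the interior case with the auxiliary function $H=u^2-su$ (for a smooth strictly convex $s>0$), and it is precisely here---not for the upper bound---that the hypothesis $\tilde\psi^{-1}_u\le-\alpha_0$ enters, via $0\ge F^{ij}H_{ij}$ and the bound $\sum_i F^{ii}\ge\tilde C_p$.

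For the boundary $C^2$ estimate, differentiating $u_\nu=\varphi$ tangentially yields information about $u_{\alpha n}$ and then $u_{\alpha\beta n}$, not about $u_{\alpha\beta}$ itself, so your tangential--tangential step does not work as stated. The paper instead runs a global auxiliary function $\Phi(x,\zeta)=h(R)(u_{\zeta\zeta}-v(x,\zeta))+B|Du|^2$ to reduce \emph{all} second derivatives to $\max_{\partial\Omega}|u_{\nu\nu}|$ (Theorem~\ref{C21}). For the double normal itself, ``$\sigma_1$ controlled from below'' only gives a lower bound on $u_{nn}$; the upper bound is obtained through a separate barrier $Q$ built from $\langle Du,D\bar r\rangle-\varphi$ and the signed distance (Theorem~\ref{C23}). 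Crucially, both arguments succeed on an \emph{arbitrary} $C^4$ domain---without the strict convexity the Ma--Qiu scheme requires---because Proposition~\ref{P4}(iii), which holds exactly when $k\le C^{p-1}_{n-1}$, supplies the pointwise lower bound $F^{ii}\ge C_p\sum_j F^{jj}$ for every $i$; this is what replaces uniform convexity of $\partial\Omega$, and you should flag it as the key structural input for the second-order estimates, not only for the interior gradient bound.
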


Nevertheless, we have the following corollary.
\begin{corollary}
Let $0\leq l<k\leq C^{p-1}_{n-1}$ and $\Omega\subset\mathbb{R}^n$ be a  domain with $C^4$ boundary. For any positive functions $\psi \in  C^2(\bar\Omega)$ and  $ \varphi \in C^3(\partial \Omega)$, then there exists a unique $(\Lambda,k)$-convex solution $u \in C^{3,\alpha}(\bar \Omega) $ for the following Neumann problem
\begin{eqnarray}\label{(1.8)}
\left\{\begin{matrix}
\frac{\sigma_k(\Lambda(D^2 u))}{\sigma_l(\Lambda(D^2 u))}=\psi (x) , & in~~ \Omega \\
 u_\nu =-\beta u+\varphi (x), & on~~\partial \Omega
\end{matrix}\right.
\end{eqnarray}
where $\beta$ is a positive constant.
\end{corollary}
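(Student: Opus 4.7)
My plan is to derive the corollary as a direct consequence of Theorem \ref{T1} by verifying its hypotheses in the present special setting, with the only nontrivial step being an independent proof of the $C^0$ bound. Since $\psi=\psi(x)$ depends only on $x$, we have $\tilde\psi_z\equiv 0$ and $\tilde\psi_p\equiv 0$, while $\tilde\psi_x$ is uniformly bounded on $\bar\Omega$ by the positivity and $C^2$ regularity of $\psi$; hence Condition \ref{growth} holds trivially for any $\gamma<1$ once $M_1$ is chosen large enough. The boundary data $\varphi(x,u)=-\beta u+\varphi(x)$ belongs to $C^3(\partial\Omega\times\mathbb{R})$, and $\varphi_u\equiv -\beta$ gives $-\varphi_u=\beta>0$, supplying the first sign assumption of Theorem \ref{T1} with $c_0=\beta$.

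The remaining assumption $(\tilde\psi^{-1})_u\leq -\alpha_0<0$ fails because $\tilde\psi$ is independent of $u$. However, this assumption enters the proof of Theorem \ref{T1} only to secure the $C^0$ bound, which in the present setting can be obtained directly from the boundary relation. The upper bound is immediate: since $u$ is $(\Lambda,k)$-convex, $\sigma_1(\Lambda(D^2u))=C^{p-1}_{n-1}\Delta u>0$, so $u$ is strictly subharmonic and attains its maximum at some $x_0\in\partial\Omega$, where $u_\nu(x_0)\geq 0$ combined with the Neumann condition yields $u(x_0)\leq\beta^{-1}\varphi(x_0)\leq\beta^{-1}\max_{\partial\Omega}\varphi$. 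For the lower bound I would combine the quantitative Newton-MacLaurin inequality on $\tilde\Gamma_k$, which gives $\Delta u\geq c_0(n,p,k,l,\min\psi)>0$ in $\Omega$, with the integrated Neumann relation $\beta\int_{\partial\Omega}u\,dS=\int_{\partial\Omega}\varphi\,dS-\int_\Omega\Delta u\,dx$, and then control the pointwise values of $u$ on $\partial\Omega$ via the boundary relation $\beta u=\varphi-u_\nu$ together with an independent oblique-boundary bound on $u_\nu$ in the spirit of \cite{MQ19,CZ21}; propagating this to the interior via the strict subharmonicity $\Delta u\geq c_0$ and a quadratic barrier argument at any interior minimum closes the two-sided $C^0$ estimate.

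With the $C^0$ bound in hand, all remaining steps of Theorem \ref{T1} apply verbatim: Theorem \ref{C11} supplies the interior gradient estimate, the boundary gradient, global $C^2$ and higher-order a priori bounds established for \eqref{(1.4)} carry over unchanged, and a standard continuity method or Leray-Schauder degree argument produces a $(\Lambda,k)$-convex solution $u\in C^{3,\alpha}(\bar\Omega)$. Uniqueness follows from the maximum principle applied to $w=u_1-u_2$: the dissipative boundary relation $w_\nu=-\beta w$ forbids a positive boundary maximum of $w$, and the concavity of $(\sigma_k/\sigma_l)^{1/(k-l)}$ on $\tilde\Gamma_k$ rules out an interior one. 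The main obstacle I foresee is making the lower-bound step genuinely independent of the global gradient and $C^2$ estimates, so as not to introduce circularity into the continuity argument; if this turns out to be delicate, a clean alternative is to perturb to $\psi_\varepsilon(x,u)=\psi(x)e^{-\varepsilon u}$, apply Theorem \ref{T1} directly (the condition on $\tilde\psi_u^{-1}$ is then satisfied), derive uniform estimates in $\varepsilon$, and pass to the limit $\varepsilon\to 0$.
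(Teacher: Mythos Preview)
Your diagnosis is correct: the only hypothesis of Theorem~\ref{T1} that fails here is the monotonicity condition $(\tilde\psi^{-1})_u\leq -\alpha_0<0$, and that condition is used only in the interior-maximum case (Case~1) of the lower bound in Theorem~\ref{C0}. The upper bound, the boundary case of the lower bound, and all of the $C^1$, $C^2$ and higher-order machinery go through unchanged, so your overall reduction is the right one.

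The gap is in your two proposed repairs of the $C^0$ lower bound. The first route (boundary control of $u$ via an ``independent'' bound on $u_\nu$) is, as you yourself flag, circular: the boundary gradient estimates of \cite{MQ19,CZ21} and of Theorem~\ref{C12} all depend on $|u|_{C^0}$. The perturbation $\psi_\varepsilon(x,u)=\psi(x)e^{-\varepsilon u}$ has the wrong sign: one computes $(\tilde\psi_\varepsilon^{-1})_u=\tfrac{\varepsilon}{k-l}\,\tilde\psi(x)^{-1}e^{\varepsilon u/(k-l)}>0$, so Theorem~\ref{T1} still does not apply. Even after correcting to $\psi_\varepsilon=\psi\,e^{+\varepsilon u}$, the admissible $\alpha_0$ is of order $\varepsilon$, and tracing through Case~1 of Theorem~\ref{C0} gives a lower bound on $u_\varepsilon$ of size $O(1/\varepsilon)$; the estimates are not uniform and you cannot pass to the limit.

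The fix is much simpler and never leaves the paper's own argument. In Case~1 of Theorem~\ref{C0} one has, at the interior maximum $x_1$ of $H=u^2-su$,
\[
0\ \geq\ (2u-s)\,\tilde\psi \;+\; \tfrac{\lambda_0}{2}(-u)\sum_i F^{ii}
\ \geq\ \Bigl(\tfrac{\lambda_0\tilde C_p}{2}-2\tilde\psi(x_1)\Bigr)(-u)\;-\;s\,\tilde\psi(x_1).
\]
When $\psi=\psi(x)$ the factor $\tilde\psi(x_1)$ is bounded above by $\max_{\bar\Omega}\tilde\psi$ \emph{independently of $u$ and $Du$}; this is precisely what fails for general $\psi(x,u,Du)$ and is why the paper imposes $(\tilde\psi^{-1})_u\leq -\alpha_0$ there. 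Here one simply chooses the auxiliary function $s$ so that $\lambda_0>4\max_{\bar\Omega}\tilde\psi/\tilde C_p$ (e.g.\ $s(x)=\tfrac{\lambda_0}{2}|x|^2+1$); the coefficient of $-u$ is then strictly positive and $-u(x_1)\leq C$ follows at once. With this observation the corollary is indeed a direct consequence of the proof of Theorem~\ref{T1}.
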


The rest of this paper is organized as follows. In Section 2, we give some properties of $\Lambda_I(D^2 u)$ . In Section 3, we prove the $C^0$ estimates. In Section 4, we give global gradient estimates. In Section 5, the second order estimates are derived. In Section 6, we prove the existence of the solution.

\section{Preliminaries}
In this section, we recall the definition and some basic properties of elementary symmetric function, which could be found in~\cite{CCQ}.

Let $\lambda=(\lambda_1,\dots,\lambda_n)\in\mathbb{R}^n$, we recall
the definition of elementary symmetric function for $1\leq k\leq n$,
\begin{equation*}
\sigma_k(\lambda)= \sum _{1 \le i_1 < i_2 <\cdots<i_k\leq
n}\lambda_{i_1}\lambda_{i_2}\cdots\lambda_{i_k}.
\end{equation*}
We also set $\sigma_0=1$ and $\sigma_k=0$ for $k>n$ or $k<0$. The Garding cone is defined by
\begin{equation*}
\Gamma_k  = \{ \lambda  \in \mathbb{R}^n :\sigma _i (\lambda ) >
0,~\forall~ 1 \le i \le k\}.
\end{equation*}
We denote $\sigma_{k-1}(\lambda|i)=\frac{\partial
\sigma_k}{\partial \lambda_i}$ and
$\sigma_{k-2}(\lambda|ij)=\frac{\partial^2 \sigma_k}{\partial
\lambda_i\partial \lambda_j}$. Next, we list some properties of
$\sigma_k$ which will be used later.

\begin{proposition}\label{P1}
Let $\lambda=(\lambda_1,\dots,\lambda_n)\in\mathbb{R}^n$ and $1\leq k\leq n$, then we have
\begin{enumerate}
\item[(\romannumeral1)]  $\Gamma_k $ are convex cones and $\Gamma_1\supset\Gamma_2\supset\cdots \supset\Gamma_n$;
\item [(\romannumeral2)]   $\sigma_{k-1}(\lambda|i)>0$ for $\lambda\in\Gamma_k,$ and $1\leq i\leq n$;
\item [(\romannumeral3)] If $\lambda\in\Gamma_k$  with $\lambda_1\geq \cdots\geq \lambda_k\geq \cdots \geq \lambda_n$, then
~$$\sigma_{k-1}(\lambda|k)\geq C(n,k)\sigma_{k-1}(\lambda).$$
\item [(\romannumeral4)] Newton-MacLaurin inequality: If $\lambda\in\Gamma_k,~n\geq k> l \geq 0$, $ n\geq r > s \geq 0$, $k\geq r$, $l\geq s$, then
\begin{equation}\label{lem21}
\Bigg[\frac{{\sigma _k (\lambda )}/{C_n^k }}{{\sigma _l (\lambda
)}/{C_n^l }}\Bigg]^{\frac{1}{k-l}} \le \Bigg[\frac{{\sigma _r
(\lambda )}/{C_n^r }}{{\sigma _s (\lambda )}/{C_n^s
}}\Bigg]^{\frac{1}{r-s}}. \notag
\end{equation}
\item [(\romannumeral5)]  If $\lambda\in\Gamma_k,~0\leq l<k\leq n,$~then
$$\sum\limits_{i=1}^n \frac{\partial \bigg[\frac{\sigma_k(\lambda)}{\sigma_l(\lambda)}\bigg]^{\frac{1}{k-l}}}{\partial \lambda_i}\geq
\bigg[\frac{C^k_n}{C^l_n}\bigg]^{\frac{1}{k-l}}.$$

\item [(\romannumeral6)]  If $\lambda\in\Gamma_k,~0\leq l<k\leq n$,~then $\bigg[\frac{\sigma_k(\lambda)}{\sigma_l(\lambda)}\bigg]^{\frac{1}{k-l}}$ are concave functions in $\Gamma_k$.

\end{enumerate}
\end{proposition}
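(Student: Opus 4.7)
The proposition collects six standard properties of elementary symmetric functions on the Garding cone $\Gamma_k$, and my plan is to derive all of them from a small tool kit: the elementary recurrences
\[
\sigma_k(\lambda) = \lambda_i\sigma_{k-1}(\lambda|i) + \sigma_k(\lambda|i), \qquad \sum_{i=1}^n \sigma_{k-1}(\lambda|i) = (n-k+1)\sigma_{k-1}(\lambda),
\]
Newton's inequality $\sigma_{k-1}\sigma_{k+1} \leq \sigma_k^2$ on $\Gamma_k$, and Garding's theorem that $\sigma_k^{1/k}$ is concave on $\Gamma_k$. With these ingredients, items (i)--(v) reduce to short bookkeeping arguments, while (vi) requires a genuinely sharper input.

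For (i), the chain $\Gamma_1 \supset \cdots \supset \Gamma_n$ is immediate from the defining positivity conditions; convexity of each $\Gamma_k$ follows because it is the intersection of superlevel sets $\{\sigma_j^{1/j} > 0\}$ of concave functions once restricted to $\Gamma_{j-1}$. For (ii), I would argue by induction on $k$: the recurrence identifies $\sigma_{k-1}(\lambda|i)$ with an elementary symmetric polynomial in the remaining $n-1$ variables, and the projection of $\Gamma_k$ onto any coordinate hyperplane lies in the corresponding $(k-1)$-cone, so positivity is inherited. For (iii), using the ordering $\lambda_1 \geq \cdots \geq \lambda_n$ I would split $\sigma_{k-1}(\lambda) = \sigma_{k-1}(\lambda|k) + \lambda_k\,\sigma_{k-2}(\lambda|k)$ and bound the second term by a combinatorial multiple of the first, exploiting the dominance of monomials in $\sigma_{k-1}(\lambda|k)$ built from the largest entries.

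Item (iv) is the classical Newton-MacLaurin chain: after normalizing $P_j := \sigma_j / \binom{n}{j}$, Newton's inequality $P_j^2 \geq P_{j-1} P_{j+1}$ on $\Gamma_k$ telescopes to give the stated quotient bound. Then (v) is a direct computation: expanding
\[
\sum_{i=1}^n \partial_i \left(\frac{\sigma_k}{\sigma_l}\right)^{\frac{1}{k-l}} = \frac{1}{k-l}\left(\frac{\sigma_k}{\sigma_l}\right)^{\frac{1}{k-l}-1} \sum_i \frac{\sigma_l\,\sigma_{k-1}(\lambda|i) - \sigma_k\,\sigma_{l-1}(\lambda|i)}{\sigma_l^2},
\]
applying the sum identity $\sum_i \sigma_{j-1}(\lambda|i) = (n-j+1)\sigma_{j-1}$, and invoking (iv) yields the explicit lower bound $[C_n^k/C_n^l]^{1/(k-l)}$. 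Finally, (vi) is the deepest item and is what I expect to be the main obstacle: the concavity of $(\sigma_k/\sigma_l)^{1/(k-l)}$ does not follow from Garding's concavity of $\sigma_k^{1/k}$ alone. The standard route, due to Trudinger, diagonalizes at a point, reduces the Hessian of $(\sigma_k/\sigma_l)^{1/(k-l)}$ to a symmetric quadratic form on the tangent space of $\Gamma_k$, and controls its eigenvalues by repeated application of Newton-MacLaurin and the recurrence; I would invoke this argument as given in the cited literature rather than reprove it from scratch.
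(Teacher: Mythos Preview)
Your sketch is mathematically sound, but you should be aware that the paper does not actually prove this proposition: it is presented as a collection of well-known facts about elementary symmetric functions, with the blanket statement that these ``could be found in~\cite{CCQ}'' (a preprint of C.~Q.~Chen). So there is no argument in the paper to compare against; you have supplied considerably more than the authors do.

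That said, your outline is the standard one and would be accepted as a proof. A few minor remarks: for (i), rather than appealing to concavity of $\sigma_j^{1/j}$ (which is itself nontrivial), it is cleaner to invoke G{\aa}rding's hyperbolicity theorem directly, since $\sigma_k$ is hyperbolic with respect to $(1,\dots,1)$ and the convexity of $\Gamma_k$ is one of its immediate consequences. For (iii), the splitting you propose works but the bookkeeping is a little delicate; the usual references (Lin--Trudinger, or Huisken--Sinestrari) obtain the constant $C(n,k)$ by a more direct counting of monomials containing $\lambda_k$. For (vi) you are right that this is the substantive item; the concavity is due to Marcus (for $\sigma_k^{1/k}$) and extended to quotients by Spruck and Trudinger, and citing those sources is entirely appropriate here since the paper itself treats the whole proposition as background.
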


 For $\lambda=(\lambda_1,\lambda_2,\cdots,\lambda_n)\in \mathbb{R}^n$ and $1\leq p\leq n-1$, recall the notation $\Lambda(\lambda)=(\Lambda_{I_1},\cdots,\Lambda_{I_N})$ that
$$\Lambda_{I_s}=\sum_{i_j\in I_s} \lambda_i=\lambda_{i_1}+\cdots+\lambda_{i_p}, \quad I_s=(i_1, \cdots, i_p)\in \mathfrak{J}.$$
Then we have the following basic properties. The proof can refer to Lemma 2.3-2.5 of \cite{ZJ}.

\begin{proposition}\label{P4}
Let $\lambda=(\lambda_1,\lambda_2,\cdots,\lambda_n)\in \mathbb{R}^n$ with $\lambda_1\geq \lambda_2\geq\cdots\geq \lambda_n$ and $\Lambda(\lambda)\in \tilde{\Gamma}_k$. If
$\Lambda(\lambda)=(\Lambda_{I_1},\cdots,\Lambda_{I_N})$ with $\Lambda_{I_1}\geq \cdots\geq \Lambda_{I_N}$, then
\begin{enumerate}
\item[(\romannumeral1)]  $\Lambda_{I_1}=\lambda_1+\cdots+\lambda_p$ and $\Lambda_{I_N}=\lambda_{n-p+1}+\cdots+\lambda_n$.
\item [(\romannumeral2)]   We have
\begin{align}\label{2.3}
&\frac{\partial \bigg[ \frac{\sigma_k(\Lambda(\lambda))}{\sigma_l(\Lambda(\lambda))}\bigg]}{\partial \Lambda_{I_1}}\leq \cdots\leq \frac{\partial \bigg[ \frac{\sigma_k(\Lambda(\lambda))}{\sigma_l(\Lambda(\lambda))}\bigg]}{\partial \Lambda_{I_N}},\quad 0\leq l<k\leq N,
\end{align}
and
\begin{align}\label{2.4}
&\frac{\partial \bigg[ \frac{\sigma_k(\Lambda(\lambda))}{\sigma_l(\Lambda(\lambda))}\bigg]}{\partial \lambda_{1}}\leq \cdots\leq \frac{\partial \bigg[ \frac{\sigma_k(\Lambda(\lambda))}{\sigma_l(\Lambda(\lambda))}\bigg]}{\partial \lambda_{n}},\quad 0\leq l<k\leq N.
\end{align}
\item [(\romannumeral3)] If $0\leq l<k\leq C^{p-1}_{n-1}$, then there exists a  positive constant $C_p$ depending on $n, k, l, p$ such that
\begin{equation}\label{2.5}
\frac{\partial\bigg[\frac{\sigma_k(\Lambda(\lambda))}{\sigma_l(\Lambda(\lambda))}\bigg]^{\frac{1}{k-l}}}{\partial \lambda_i}\geq C_p
\sum\limits_{i=1}^n\frac{\partial\bigg[\frac{\sigma_k(\Lambda(\lambda))}{\sigma_l(\Lambda(\lambda))}\bigg]^{\frac{1}{k-l}}}{\partial \lambda_i},~~~~1\leq i\leq n;
\end{equation}
\item [(\romannumeral4)] If $0\leq l<k\leq N$, then
$\bigg[\frac{\sigma_k(\Lambda(\lambda))}{\sigma_l(\Lambda(\lambda))}\bigg]^{\frac{1}{k-l}}$ are concave with respect to $\lambda$ and
\begin{equation}\label{2.9}
\sum\limits_{i=1}^n\frac{\partial\bigg[\frac{\sigma_k(\Lambda(\lambda))}{\sigma_l(\Lambda(\lambda))}\bigg]^{\frac{1}{k-l}}}{\partial \lambda_i}\geq \tilde{C_p}:=p\bigg(\frac{C_N^k}{C_N^l}\bigg)^{\frac{1}{k-l}}>0.
\end{equation}
\end{enumerate}
\end{proposition}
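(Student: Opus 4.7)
The plan is to prove parts (i), (ii), (iv) of Proposition \ref{P4} by transferring the corresponding properties of $\sigma_k/\sigma_l$ from Proposition \ref{P1} through the linear map $\lambda \mapsto \Lambda(\lambda)$, and to handle part (iii) --- the main obstacle --- by a combinatorial pigeonhole that crucially exploits the hypothesis $k \leq C^{p-1}_{n-1}$.

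Part (i) is immediate from $\lambda_1 \geq \cdots \geq \lambda_n$: the largest $p$-sum is $\lambda_1+\cdots+\lambda_p$ and the smallest is $\lambda_{n-p+1}+\cdots+\lambda_n$. For (2.3), I set $\mu = (\Lambda_{I_1},\ldots,\Lambda_{I_N}) \in \tilde\Gamma_k$ and note that $\tilde F(\mu) := [\sigma_k(\mu)/\sigma_l(\mu)]^{1/(k-l)}$ is symmetric and concave on $\tilde\Gamma_k$ by Proposition \ref{P1}(vi) applied in $\mathbb{R}^N$. For $s<t$ (so $\mu_s \geq \mu_t$), consider $h(r) := \tilde F(\mu + r(e_t - e_s))$ on $[0,\mu_s-\mu_t]$. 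By symmetry, $h(0) = h(\mu_s-\mu_t)$; by concavity, $h'(0) \geq 0$, which rearranges to $\partial_{\mu_t}\tilde F \geq \partial_{\mu_s}\tilde F$. Since the ordering of partials of $\tilde F$ matches that of $\sigma_k/\sigma_l$ (because $\partial_\mu(\sigma_k/\sigma_l) = (k-l)\tilde F^{k-l-1}\partial_\mu \tilde F$ and $\tilde F>0$), this yields (2.3). For (2.4), apply the chain rule $\partial_{\lambda_i} F = \sum_{I \ni i}\partial_{\Lambda_I}F$: for $i<j$, bijectively pair each $I \ni i$, $I \not\ni j$ with $I' := (I\setminus\{i\})\cup\{j\}$, so $\Lambda_{I'} - \Lambda_I = \lambda_j - \lambda_i \leq 0$, and (2.3) gives $\partial_{\Lambda_{I'}}F \geq \partial_{\Lambda_I}F$ term by term.

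Part (iv) is routine: concavity of $\tilde F(\lambda) = [\sigma_k(\Lambda(\lambda))/\sigma_l(\Lambda(\lambda))]^{1/(k-l)}$ follows from concavity of the quotient on $\tilde\Gamma_k \subset \mathbb{R}^N$ composed with the linear map $\Lambda$. Each $\partial_{\Lambda_I}\tilde F$ appears in $\partial_{\lambda_i}\tilde F$ for exactly the $p$ indices $i \in I$, so
\begin{equation*}
\sum_{i=1}^n \partial_{\lambda_i}\tilde F \;=\; p\sum_I \partial_{\Lambda_I}\tilde F \;\geq\; p\left(\frac{C_N^k}{C_N^l}\right)^{1/(k-l)},
\end{equation*}
by Proposition \ref{P1}(v) in $\mathbb{R}^N$.

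Part (iii) is the crux. By (2.4) it suffices to lower-bound $\partial_{\lambda_1}\tilde F = \sum_{I \ni 1}\partial_{\Lambda_I}\tilde F$, a sum of the $C^{p-1}_{n-1}$ partials whose indexing subset contains $1$. By (2.3), these tend to be the smaller partials since $\lambda_1$ being largest makes $\Lambda_I$ with $I \ni 1$ larger. The hypothesis $k \leq C^{p-1}_{n-1}$ triggers a pigeonhole: order the $N$ partials as $\partial_{\Lambda_{I_{\pi(1)}}}\tilde F \leq \cdots \leq \partial_{\Lambda_{I_{\pi(N)}}}\tilde F$ for some permutation $\pi$; the $C^{p-1}_{n-1}$ indices with $I \ni 1$ cannot all sit in the bottom $k-1$ positions since $C^{p-1}_{n-1} \geq k$, hence at least one $I^* \ni 1$ satisfies $\partial_{\Lambda_{I^*}}\tilde F \geq \partial_{\Lambda_{I_{\pi(k)}}}\tilde F$. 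Combined with a quotient analogue of Proposition \ref{P1}(iii) in $\mathbb{R}^N$ --- $\partial_{\Lambda_{I_{\pi(k)}}}\tilde F \geq C(N,k)\sum_I \partial_{\Lambda_I}\tilde F$, derived by applying the classical statement to $\sigma_k$ and $\sigma_l$ separately via the formula $\partial_\mu(\sigma_k/\sigma_l) = [\sigma_{k-1}(\mu|i)\sigma_l - \sigma_k\sigma_{l-1}(\mu|i)]/\sigma_l^2$ --- one obtains
\begin{equation*}
\partial_{\lambda_1}\tilde F \;\geq\; \partial_{\Lambda_{I^*}}\tilde F \;\geq\; \partial_{\Lambda_{I_{\pi(k)}}}\tilde F \;\geq\; C(N,k)\sum_I \partial_{\Lambda_I}\tilde F \;=\; \frac{C(N,k)}{p}\sum_i \partial_{\lambda_i}\tilde F,
\end{equation*}
giving $C_p := C(N,k)/p$. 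The main delicacy lies in the pigeonhole's sharp use of $k \leq C^{p-1}_{n-1}$ --- precisely the combinatorial threshold that makes the argument go through --- together with deriving the quotient analogue of Proposition \ref{P1}(iii).
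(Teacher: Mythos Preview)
The paper does not prove this proposition itself; it simply refers to Lemmas~2.3--2.5 of Zhou~\cite{ZJ}, so there is no in-text argument to compare against directly. Your treatment of (i), (ii), and (iv) is correct and is the standard route: the symmetry-plus-concavity trick for \eqref{2.3}, the swap-pairing via the chain rule for \eqref{2.4}, and the double-count identity $\sum_i\partial_{\lambda_i}\tilde F=p\sum_I\partial_{\Lambda_I}\tilde F$ combined with Proposition~\ref{P1}(v) for \eqref{2.9}. This is almost certainly what \cite{ZJ} does as well.

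For (iii) your pigeonhole is the right mechanism and correctly pinpoints where the hypothesis $k\le C^{p-1}_{n-1}$ enters: since there are $C^{p-1}_{n-1}\ge k$ subsets containing~$1$, at least one of them, say $I^*$, cannot lie among the top $k-1$ in the $\Lambda$-ordering, whence $\partial_{\Lambda_{I^*}}\tilde F\ge\partial_{\Lambda_{I_k}}\tilde F$. The only soft spot is the step you label ``a quotient analogue of Proposition~\ref{P1}(iii)'', namely that for $\mu\in\tilde\Gamma_k\subset\mathbb R^N$ ordered decreasingly one has $\partial_{\mu_k}\big[(\sigma_k/\sigma_l)^{1/(k-l)}\big]\ge C(N,k,l)\sum_i\partial_{\mu_i}\big[(\sigma_k/\sigma_l)^{1/(k-l)}\big]$. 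This is true and is standard in the Hessian-quotient literature, but it does \emph{not} follow by ``applying the classical statement to $\sigma_k$ and $\sigma_l$ separately'' as you write. In the formula
\[
\partial_{\mu_i}\Big(\frac{\sigma_k}{\sigma_l}\Big)=\frac{\sigma_{k-1}(\mu|i)\,\sigma_l-\sigma_k\,\sigma_{l-1}(\mu|i)}{\sigma_l^2},
\]
a lower bound on $\sigma_{k-1}(\mu|k)$ from Proposition~\ref{P1}(iii) together with any separate bound on $\sigma_{l-1}(\mu|k)$ does not yield a lower bound on the \emph{difference} without first controlling the ratio $\sigma_k\sigma_{l-1}(\mu|k)\big/\big(\sigma_l\sigma_{k-1}(\mu|k)\big)$ strictly away from~$1$. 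That extra ingredient comes from Newton--MacLaurin-type inequalities (or the Lin--Trudinger concavity argument), after which Proposition~\ref{P1}(iii) on the surviving $\sigma_{k-1}(\mu|k)$ term does finish the job. You should state and prove this as a separate lemma rather than leave it inside a parenthetical.
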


For convenience, we make the following notations
$$
F(D^2 u):=\bigg[\frac{\sigma_k(\Lambda(D^2 u))}{\sigma_l(\Lambda(D^2 u))}\bigg]^{\frac{1}{k-l}}, \quad F^{ij}=\frac{\partial F}{\partial u_{ij}}, \quad \quad~F^{ij,kl}=\frac{\partial^2 F}{\partial u_{ij} u_{kl}}, \quad \tilde{\psi}=\psi^{\frac{1}{k-l}}.$$
Equation \eqref{(1.1)} can be written by
$$F(D^2 u)= \tilde{\psi}(x, u, Du).$$
Then we have

\begin{proposition}\label{P7}
For any symmetric matrix $(\eta_{ij})$, we have
$$F^{ij,kl}\eta_{ij}\eta_{kl}=F^{ii,jj}\eta_{ii}\eta_{jj}+\sum_{i\ne j} \frac{F^{ii}-F^{jj}}{u_{ii}-u_{jj}}\eta^2_{ij}.$$
Moreover, the second term at the right-hand of the equation is non-positive.
\end{proposition}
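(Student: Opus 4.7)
The statement is the standard second-variation formula for a smooth $O(n)$-invariant function of a symmetric matrix, specialized here to $F = (\sigma_k/\sigma_l)^{1/(k-l)} \circ \Lambda$. My plan is to (i) reduce to the case that $D^2 u$ is diagonal at the point of interest, (ii) derive the identity by differentiating $F$ twice along a one-parameter linear perturbation and invoking first-order eigenvalue perturbation theory, and (iii) deduce the non-positivity of the off-diagonal sum from the monotonicity \eqref{2.4} established in Proposition \ref{P4}.

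For step (i), both sides of the claimed identity define a symmetric bilinear form on symmetric matrices acting on $(\eta_{ij})$, and $F$ depends on $D^2 u$ only through its eigenvalues, hence is invariant under conjugation by $O(n)$; thus after a rotation of coordinates I may assume $D^2u = \mathrm{diag}(u_{11},\ldots,u_{nn})$ at the point in question. For step (ii), set $A(t) = D^2 u + t(\eta_{ij})$ and write $F(A(t)) = \tF(\lambda_1(t),\ldots,\lambda_n(t))$, where $\tF$ is symmetric in its arguments. Assuming first that the $u_{ii}$ are pairwise distinct, standard perturbation theory gives $\lambda_i'(0) = \eta_{ii}$ and $\lambda_i''(0) = 2\sum_{j \neq i}\eta_{ij}^2/(u_{ii}-u_{jj})$, so the chain rule yields
$$F^{ij,kl}\eta_{ij}\eta_{kl} \;=\; \sum_{i,j} \tF_{ij}\,\eta_{ii}\eta_{jj} \;+\; 2\sum_{i \neq j}\frac{\tF_i}{u_{ii}-u_{jj}}\,\eta_{ij}^2.$$
In diagonal coordinates $F^{ii}=\tF_i$ and $F^{ii,jj}=\tF_{ij}$; symmetrizing the last sum in the pair $(i,j)$ turns $2\tF_i/(u_{ii}-u_{jj})$ into $(F^{ii}-F^{jj})/(u_{ii}-u_{jj})$, yielding the claimed identity.

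For the non-positivity, fix $i\neq j$ and relabel so that $u_{ii}\geq u_{jj}$. The monotonicity \eqref{2.4} in Proposition \ref{P4} says that the partial derivative of $\sigma_k(\Lambda)/\sigma_l(\Lambda)$ with respect to the \emph{larger} eigenvalue is the \emph{smaller}, and this ordering is preserved under the positive-power operation $g \mapsto g^{1/(k-l)}$; hence $F^{ii}\leq F^{jj}$, so $(F^{ii}-F^{jj})/(u_{ii}-u_{jj})\leq 0$, and each summand $(F^{ii}-F^{jj})(u_{ii}-u_{jj})^{-1}\eta_{ij}^2$ is non-positive. The only real obstacle is handling points where $D^2 u$ has repeated eigenvalues, at which $(F^{ii}-F^{jj})/(u_{ii}-u_{jj})$ is a $0/0$ indeterminate; one resolves this by approximating $D^2u$ by matrices with pairwise distinct eigenvalues, applying the formula just derived, and passing to the limit, the ratio extending continuously because $F$ is a smooth symmetric function of the eigenvalues.
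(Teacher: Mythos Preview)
Your argument is correct and is precisely the standard derivation. The paper does not give its own proof; it simply cites Gerhardt \cite{CG}, and what you have written is essentially the argument found there: diagonalize $D^2u$, use first- and second-order eigenvalue perturbation formulas to obtain the displayed identity, symmetrize the off-diagonal sum, and infer the sign from the monotonicity of $F^{ii}$ in the eigenvalues. Your invocation of \eqref{2.4} for the non-positivity is appropriate in this setting (and indeed slightly more explicit than the usual reference to concavity of $F$ as a symmetric function of eigenvalues), and your treatment of repeated eigenvalues by approximation is the standard one.
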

\begin{proof}
The proof can be found in \cite{CG}.
\end{proof}

\section{$C^0$ estimates}
In this section, we consider the $C^0$ estimates for the $(\Lambda, k)$-convex solution of equation \eqref{(1.4)}.

\begin{theorem}\label{C0}
Let $u\in C^2(\Omega)\cap C^1(\overline\Omega)$ be a $(\Lambda, k)$-convex solution of Neumann problem~\eqref{(1.4)}. Suppose that there exists positive constants $c_0, \alpha_0$ such that $-\varphi_u\geq c_0>0$ and  $\tilde{\psi}^{-1}_u\leq -\alpha_0<0$,~then
$$|u|_{C^0}\leq C,$$
where~$C$~is a positive constant depending on $n, k, l, p, c_0, \alpha_0, |\varphi|_{C^0}$ and $\Omega$.
\end{theorem}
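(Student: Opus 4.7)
The plan is to bound $\max_{\bar\Omega}u$ from above and $\min_{\bar\Omega}u$ from below by separate maximum-principle arguments, each reducing ultimately to the Neumann data together with one of the two monotonicity hypotheses.

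For the upper bound, let $x_0\in\bar\Omega$ realize $\max u$. An interior maximum is impossible: at such a point $D^2u(x_0)\le 0$ forces every $\Lambda_I(D^2u(x_0))\le 0$, so $\sigma_1(\Lambda(D^2u(x_0)))\le 0$, contradicting $\Lambda(D^2u)\in\tilde\Gamma_k\subset\tilde\Gamma_1$. Hence $x_0\in\partial\Omega$ with $u_\nu(x_0)\ge 0$, so the Neumann condition gives $\varphi(x_0,u(x_0))\ge 0$. Combining this with $-\varphi_u\ge c_0$ via the mean value theorem yields $u(x_0)\le|\varphi|_{C^0}/c_0$. A symmetric argument at a boundary minimum produces $u(x_0)\ge -|\varphi|_{C^0}/c_0$.

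The main content is the lower bound when the minimum is attained at an interior point $x_0\in\Omega$. There $Du(x_0)=0$ and $D^2u(x_0)\ge 0$, and the equation reduces to $F(D^2u(x_0))=\tilde\psi(x_0,u(x_0),0)$. I would compare $u$ against the explicit $(\Lambda,k)$-convex sub-barrier
$$\underline u(x)=A+\frac{\varepsilon}{2}|x-x_*|^2,\qquad x_*\in\bar\Omega,$$
for $\varepsilon>0$ small and $A<0$ of large absolute value. Admissibility is automatic since $D^2\underline u=\varepsilon I$, giving $F(D^2\underline u)=\varepsilon\,p(C_N^k/C_N^l)^{1/(k-l)}$. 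Integrating $\tilde\psi^{-1}_u\le-\alpha_0$ from $\underline u$ up to $0$ (when $\underline u\le 0$) produces the key smallness
$$\tilde\psi(x,\underline u,D\underline u)\le(\alpha_0|\underline u|)^{-1},$$
so the PDE subsolution inequality $F(D^2\underline u)\ge\tilde\psi(x,\underline u,D\underline u)$ is secured once $\varepsilon|A|$ exceeds an explicit constant depending on $n,k,l,p,\alpha_0,\mbox{diam}\,\Omega$. The Neumann subsolution inequality $\underline u_\nu\le\varphi(x,\underline u)$ on $\partial\Omega$ reduces, via $\varphi(x,\underline u)\ge-|\varphi|_{C^0}+c_0|\underline u|$ (from $-\varphi_u\ge c_0$), to taking $|A|$ large compared with $(|\varphi|_{C^0}+\varepsilon\,\mbox{diam}\,\Omega)/c_0$. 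The comparison principle for $u\mapsto F(D^2u)-\tilde\psi(x,u,Du)$ with this Neumann data is valid because $\tilde\psi_u=-\tilde\psi^2\tilde\psi^{-1}_u\ge\alpha_0\tilde\psi^2>0$ and $-\varphi_u\ge c_0>0$; testing at an interior or boundary maximum of $\underline u-u$ rules out any positive maximum, so $u\ge\underline u\ge A$ on $\bar\Omega$.

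The principal technical obstacle is exactly the simultaneous satisfaction of admissibility, the PDE subsolution inequality and the Neumann subsolution inequality for a single pair $(\varepsilon,A)$ with constants depending only on $n,k,l,p,c_0,\alpha_0,|\varphi|_{C^0},\Omega$. The smallness bound $\tilde\psi\le(\alpha_0|u|)^{-1}$ extracted from the $\alpha_0$-hypothesis is precisely what decouples the analytic constraint from the boundary constraint; once this estimate is in hand, choosing for instance $\varepsilon=1$ and $|A|$ as the maximum of the two explicit thresholds above closes the argument with the advertised dependence of the final constant.
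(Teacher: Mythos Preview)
Your argument is correct. The upper bound matches the paper's exactly. For the lower bound you take a genuinely different route: you build an explicit quadratic subsolution $\underline u=A+\tfrac{\varepsilon}{2}|x-x_*|^2$, use $\tilde\psi^{-1}_u\le-\alpha_0$ to obtain $\tilde\psi(x,\underline u,D\underline u)\le(\alpha_0|\underline u|)^{-1}$ and $-\varphi_u\ge c_0$ to verify the Neumann subsolution inequality, and then invoke the comparison principle (legitimate since $\tilde\psi_u\ge\alpha_0\tilde\psi^2>0$ and $\varphi_u<0$, and both $D^2u$ and $D^2\underline u=\varepsilon I$ lie in the convex admissible cone). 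The paper instead applies the linearised operator $F^{ij}\partial_i\partial_j$ to the auxiliary function $H=u^2-s(x)\,u$, where $s>0$ is a fixed smooth function with $D^2s\ge\lambda_0 I$ on $\bar\Omega$: at an interior maximum of $H$ the inequality $F^{ij}H_{ij}\le0$ combines with $F^{ij}s_{ij}\ge\lambda_0\sum_i F^{ii}\ge\lambda_0\tilde C_p$ and the same growth $\tilde\psi^{-1}\ge\alpha_0(-u)$ to bound $-u(x_1)$; a boundary maximum of $H$ is handled via the Neumann data exactly as in your boundary-minimum step. Both proofs ultimately rest on the identical key estimate $\tilde\psi\le(\alpha_0|u|)^{-1}$ for large negative $u$. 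Your barrier method is more classical and makes the comparison structure transparent; the paper's auxiliary-function method avoids stating a separate comparison lemma and keeps the $C^0$ estimate within the same linearised-operator framework used later for the gradient and Hessian bounds.
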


\begin{proof}
On the one hand, note that $\Delta u>0$ since $\Lambda(D^2 u)\in \tilde{\Gamma}_k\subset \tilde{\Gamma}_1$, hence $u$ reaches maximum value at $x_0 \in \partial \Omega $. Assuming $u(x_0)>0$, otherwise $u $ would have an upper bound, then
\begin{eqnarray*}
\begin{split}
0\leq u_\nu&=\varphi(x_0,u(x_0))-\varphi(x_0,0)+\varphi(x_0,0)\\
&=u(x_0)\varphi_u(x_0,tu(x_0))+\varphi(x_0,0)\\
&\leq -c_0u(x_0)+\varphi(x_0,0).
\end{split}
\end{eqnarray*}
where $t\in (0,1)$, it implies
$$u\leq C.$$

On the other hand, we need to obtain the lower bound for $u$. Consider the auxiliary function
$$H(x)=u^2(x)-s(x)u(x),$$
where the smooth function $s$ satisfies the following properties in $\Omega $
$$s>0, \quad D^2 s\geq \lambda_0I>0.$$
Here $\lambda_0$ is a positive constant.
Let $H(x)$ achieves the maximum value at point $x_1\in \bar{\Omega}$. We assuming $u(x_1)<-\frac{{|s|_{C^1}}^2}{\lambda_0}$, otherwise $u$ would have an lower bound. We discuss the situation separately.

\textbf{Case 1: }  $x_1\in \Omega $. At point $x_1$, we have
$$H_i=2uu_i-s_iu-su_i=0,$$
and
\begin{eqnarray*}
\begin{split}
0\geq H_{ij}=(2u-s)u_{ij}+2s_is_j\bigg(\frac{u}{2u-s}- \frac{1}{2}\bigg)^2-\frac{1}{2}s_is_j-s_{ij}u.
\end{split}
\end{eqnarray*}
Then
\begin{eqnarray*}
\begin{split}
0\geq F^{ij}H_{ij}\geq&(2u-s)\tilde{\psi}-\frac{1}{2}\sum F^{ii} |Ds|^2+(-u)F^{ij}s_{ij}\\
\geq&(2u-s)\tilde{\psi}+\frac{\lambda_0}{2}(-u)\sum_i F^{ii}.
\end{split}
\end{eqnarray*}
Due to $\tilde{\psi}>0$ and ~$\sum\limits_i F^{ii}\geq \tilde{C_p}$, we get
\begin{eqnarray*}
\begin{split}
0 \geq &-2+\frac{s}{u}+\frac{\lambda_0}{2}\tilde{C_p} \tilde{\psi}^{-1}(x_1,u(x_1),Du(x_1))\\
\geq &-2+\frac{s}{u}+\frac{\lambda_0}{2}\tilde{C_p}\bigg[\bigg(\tilde{\psi}^{-1}(x_1,tu(x_1),Du(x_1))\bigg)_uu(x_1)+\tilde{\psi}^{-1}(x_1,0,Du(x_1))\bigg]\\
\geq &-2+\frac{-s(x_1)}{-u(x_1)}+\frac{\lambda_0}{2}\tilde{C_p}\alpha_0(-u(x_1)),
\end{split}
\end{eqnarray*}
where $t\in (0,1)$. It implies that $-u(x_1)\leq C$, and then $H(x)\leq H(x_1)\leq C$. Thus we obtain
$$u(x)\geq -C,\quad\forall x \in\bar{\Omega}.$$

\textbf{Case 2: } $x_1\in\partial \Omega$. Then,
\begin{eqnarray*}
\begin{split}
0\leq &D_\nu H(x_1)=(2u-s)\varphi-uD_\nu s\\
=&2\varphi_u(x_1,tu(x_1)) u^2+u(2\varphi(x_1,0)-D_\nu s-s\varphi_u(x_1,tu(x_1)))-s\varphi(x_1,0)\\
\leq& -2c_0u^2-C_1u+C_1
\end{split}
\end{eqnarray*}
 where $t\in (0,1)$. Similarly we get
$|u(x_1)|\leq C$ and hence
$$u(x)\geq -C,\quad\forall x\in\bar{\Omega}.$$
Then the proof is completed.
\end{proof}
\
\section{global gradient estimates}
In this section, we consider the global gradient  estimates for the $(\Lambda, k)$-convex solution of equation \eqref{(1.4)}. We always assume that the conditions in Theorem \ref{T1} hold.

\subsection{Internal gradient estimates}
In this subsection, we establish the internal gradient estimates, i.e. Theorem \ref{C11}, under  \textbf{Condition \ref{growth}}.

\begin{proof}[\textbf{Proof of Theorem \ref{C11}}]
Set auxiliary function
\begin{eqnarray*}
\phi=\log |D u|+\log \eta (u)+\log \rho,
\end{eqnarray*}
where
$$\rho=r^2-|x|^2,  \quad \eta(u)=(M-u)^{-\frac{1}{3}}, \quad M=3\underset{B_r(0)}{osc} u+\underset{B_r(0)}{sup} u.$$
Assuming that $\phi $ takes its maximum value at point  $x_0 \in B_r(0) $. By rotating the coordinate system $\{x_1,\cdots,x_n\}$ so that $D^2 u(x_0)$ is diagonal and $u_{11}(x_0)=\lambda_1(x_0)\geq \cdots\geq u_{nn}(x_0)= \lambda_n(x_0)$. Let $T=\underset{B_r(0)}{osc} u$. All calculations  are  at point $x_0 $. We have
\begin{eqnarray}\label{324}
0=\phi_i=\frac{\sum_k u_k u_{ki}}{|Du|^2}+\frac{\eta_i}{\eta}+\frac{\rho_i}{\rho}.
\end{eqnarray}
Calculate second-order derivative
\begin{eqnarray}\label{325}
\begin{split}
0\geq \phi_{ij}=&\frac{\sum_k u_k u_{ijk}}{|Du|^2}+\frac{\sum_k u_{kj}u_{ki}}{|Du|^2}-\frac{2\sum_l u_lu_{lj}\sum_k u_ku_{ki}}{|Du|^4}\\
&+\frac{\eta_{ij}}{\eta}-\frac{\eta_i\eta_j}{\eta^2}+\frac{\rho_{ij}}{\rho}-\frac{\rho_i\rho_j}{\rho^2}.
\end{split}
\end{eqnarray}
Combining with \eqref{324}  and \eqref{325},
\begin{align*}
\begin{split}
0&\geq F^{ij}\phi_{ij}\\
&\geq\frac{1}{|Du|^2} \bigg( \sum_k\left(\tilde{\psi}_{x_k}u_k+\tilde{\psi}_u u_k^2\right)-\sum_i \tilde{\psi}_{p_i}(\frac{\eta_i}{\eta}+\frac{\rho_i}{\rho})|Du|^2\bigg)+F^{ij}\bigg(\frac{\eta_{ij}}{\eta}-3\frac{\eta_i\eta_j}{\eta^2}\bigg)\\
&\quad+F^{ij}\bigg(\frac{\rho_{ij}}{\rho}-3\frac{\rho_i\rho_j}{\rho^2}\bigg)-2F^{ij}\bigg(\frac{\eta_i\rho_j}{\eta\rho}+\frac{\eta_j\rho_i}{\eta\rho}\bigg)\\
&\geq \frac{1}{|Du|^2}\bigg(-|D_x\tilde{\psi}||Du|-|D_u\tilde{\psi}||Du|^2-|D_p\tilde{\psi}||Du|^3\frac{\eta'}{\eta}-|D_p\tilde{\psi}||Du|^2\frac{|D{\rho}|}{\rho}\bigg)\\
&\quad +F^{ii}u_i^2\bigg(\frac{\eta''}{\eta}-3\frac{\eta'^2}{\eta^2}\bigg)+\tilde{\psi}\frac{\eta'}{\eta}+ \sum_i F^{ii}\bigg(\frac{-2}{\rho}-3\frac{|D{\rho}|^2}{\rho^2}\bigg)
-4\sum_i F^{ii}\frac{\eta'|D{\rho}||D u|}{\eta \rho}\\
&\geq-\frac{|D_x \tilde{\psi}|}{|Du|}-|D_u\tilde{\psi}|-\frac{1}{9T}|D_p\tilde{\psi}||Du|-|D_p\tilde{\psi}|\frac{2r}{\rho}+C_p\sum_i F^{ii}|Du|^2\frac{1}{144T^2}\\
&\quad-\sum_i F^{ii}\bigg(\frac{2}{\rho}+\frac{12r^2}{\rho^2}+\frac{8r|Du|}{9T\rho}\bigg)\\
&\geq-C_1 \bigg({|Du|}^{1+\gamma}+\frac{{|Du|}^{1+\gamma}}{9T}+\frac{2r|Du|^{\gamma}}{\rho}\bigg)
+C_p\sum_i F^{ii}|Du|^2\frac{1}{144T^2}\\
&\quad -\sum_i F^{ii}\bigg(\frac{2}{\rho}+\frac{12r^2}{\rho^2}+\frac{8r|Du|}{9T\rho}\bigg)
\end{split}
\end{align*}
if $|D u|>M_1$, in the last inequality, we use the \textbf{Condition \ref{growth}}.
Then we get
$$({\sqrt{C_p}}|Du|\rho-\frac{64}{\sqrt{C_p}}Tr)^2\leq (\frac{{64}^2}{C_p}+144\cdot14)T^2r^2+CT^2\rho^2\bigg({|Du|}^{1+\gamma}+\frac{{|Du|}^{1+\gamma}}{T}+\frac{r|Du|^{\gamma}}{\rho}\bigg),$$
where $C$ is a positive constant.
It is easy to show that
$$|Du|\rho(x_0)\leq C(Tr+T^\frac{2}{1-\gamma}r^2+T^\frac{1}{1-\gamma}r^2+T^{\frac{2}{2-\gamma}} r^{\frac{3-2\gamma}{2-\gamma}}).$$
So
\begin{align*}
\begin{split}
|Du|(0)&\leq \frac{\eta(u)(x_0)}{\eta(u)(0)}\frac{1}{\rho(0)}\rho(x_0)|Du|(x_0)\\
&\leq{( \frac{4}{3})}^{-\frac{1}{3}}\frac{1}{r^2}|Du|(x_0)\rho(x_0)\\
&\leq{( \frac{4}{3})}^{-\frac{1}{3}}C(\frac{T}{r}+T^{\frac{2}{1-\gamma}}+T^{\frac{1}{1-\gamma}}+T^{\frac{2}{2-\gamma}} r^{-\frac{1}{2-\gamma}})\\
&\leq {C(\frac{T}{r}+T^{\frac{2}{1-\gamma}}+T^{\frac{1}{1-\gamma}})}.
\end{split}
\end{align*}
where $C$ is a positive constant depending on $k, l, n, p, M_1$ and $C_1$.

\begin{remark}
If $\tilde{\psi}$ is a constant, then
$$|Du|\rho(x_0)\leq CTr,$$
thus we can obtain
$$|Du|(0)\leq C\frac{T}{r},$$
without the  \textbf{Condition \ref{growth}}.
\end{remark}


\end{proof}

\subsection{Boundary gradient estimates}
\ \ \\

In this section, we give a very small positive constant $\mu$ to define the boundary band of $\partial \Omega$ as
$$\Omega_\mu:=\{x\in\Omega,~0<d(x,~\partial \Omega)<\mu\}.$$
\begin{theorem}\label{C12}
Assuming that $u \in C^3(\Omega)$ is a $(\Lambda, k)$-convex solution for equation \eqref{(1.4)}, $\tilde{\psi}>0$ has the growth \textbf{Condition \ref{growth}}. Then there exists a constant $\mu$ depending on $|\varphi|_{C^2}$ such that
$$\underset{\bar\Omega_\mu}{\sup}|D u|\leq C,$$
where $C$ is a positive constant depending on $|u|_{C^0}, |\varphi|_{C^2}, C_1, M_1, \mu, n, k, l, p$ and $\Omega$.
\end{theorem}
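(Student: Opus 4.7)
The plan is to run a maximum principle argument on an auxiliary function defined on the boundary band $\bar\Omega_\mu$, and separate into three cases depending on where the maximum is attained. First I would shrink $\mu$ so that the signed distance $d(x) = \mathrm{dist}(x,\partial\Omega)$ is $C^4$ on $\bar\Omega_\mu$ and the nearest-point projection $\pi \colon \bar\Omega_\mu \to \partial\Omega$ is smooth; this is possible because $\partial\Omega \in C^4$. Extend the outer normal to $\Omega_\mu$ by $\nu(x) = -Dd(x)$ and extend $\varphi$ off the boundary by $\tilde\varphi(x,u) := \varphi(\pi(x),u)$, so that $|\tilde\varphi|_{C^2}$ is controlled by $|\varphi|_{C^2}$. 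The key boundary quantity
$$
w(x) := \sum_k u_k(x)\,\nu_k(x) - \tilde\varphi(x,u(x))
$$
vanishes identically on $\partial\Omega$ by the Neumann condition, and the auxiliary function I would use is
$$
\Phi(x) = \log|Du|^2 + \alpha_1 w(x) + \alpha_2 d(x) + h(u(x)),
$$
where $h$ is a bounded smooth function of $u$ (bounded via Theorem \ref{C0}) and $\alpha_1, \alpha_2$ are positive constants to be chosen depending on $|\varphi|_{C^2}$ and on the structural constants from Proposition \ref{P4} and Condition \ref{growth}.

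Let $x_0 \in \bar\Omega_\mu$ be a maximum of $\Phi$. In case (i), $x_0$ is an interior point of $\Omega_\mu$: the first and second derivative tests contracted with $F^{ij}$, combined with the concavity of $F$, the trace bound $\sum F^{ii} \ge \tilde C_p$ from Proposition \ref{P4}, and the growth Condition \ref{growth}, lead to a bound $|Du(x_0)| \le C$ by following the calculation in the proof of Theorem \ref{C11}; the additional $\alpha_1 w$, $\alpha_2 d$, and $h(u)$ terms have $C^2$ norms bounded in terms of $|\varphi|_{C^2}$ and $|u|_{C^0}$ and are absorbed into the dominant $\sum F^{ii}|Du|^2$ coercive term. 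In case (ii), $x_0 \in \partial\Omega_\mu \cap \Omega$: the ball $B_{\mu/2}(x_0)$ lies inside $\Omega$, and Theorem \ref{C11} applied there gives $|Du(x_0)| \le C(\mu)$. In case (iii), $x_0 \in \partial\Omega$: Hopf's lemma gives $\Phi_\nu(x_0) \ge 0$; expanding using $d_\nu = -1$ and tangentially differentiating the Neumann condition to rewrite $\sum_k u_k u_{k\nu}$ in terms of boundary data and the second fundamental form of $\partial\Omega$, one reaches an inequality of the form
$$
0 \le C(|Du|+1)|\varphi|_{C^2} + \alpha_1 C_{\varphi} - \alpha_2,
$$
which is a contradiction once $\alpha_2$ is chosen large enough in terms of $\alpha_1$ and $|\varphi|_{C^2}$. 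Combining the three cases yields $\sup_{\bar\Omega_\mu} |Du| \le C$.

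The main obstacle is case (iii): the normal derivative $w_\nu$ is not small on $\partial\Omega$ even though $w \equiv 0$ there, and it couples second derivatives of $u$ at $x_0$ to the data of $\varphi$ via tangential differentiation of $u_\nu = \varphi(x,u)$, which produces terms $\varphi_{x_i} + \varphi_u u_{x_i}$ that must be absorbed. The role of the $\alpha_1 w$ term in $\Phi$ is precisely to cancel the awkward $\tfrac{2\sum u_k u_{k\nu}}{|Du|^2}$ contribution at the boundary maximum, leaving a residue controlled purely by boundary data and by $|Du|$. The correct hierarchy among $\alpha_1$, $\alpha_2$, $|\varphi|_{C^2}$ and the constants from Proposition \ref{P4} and Condition \ref{growth} must be chosen so that the interior case (i) still closes without the new terms spoiling the absorption, and simultaneously the boundary case (iii) produces a strict negative sign contradicting Hopf. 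This is the delicate balance on which the estimate hinges.
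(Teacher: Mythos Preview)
Your three-case structure matches the paper's, but the choice of auxiliary function has a genuine gap in case (iii). When you compute $\Phi_\nu$ at a boundary maximum, the term
\[
\frac{2\sum_k u_k u_{k\nu}}{|Du|^2}
=\frac{2\,D^2u\big((Du)^T,\nu\big)}{|Du|^2}+\frac{2u_\nu\,u_{\nu\nu}}{|Du|^2}
\]
contains the double normal derivative $u_{\nu\nu}$, which is \emph{not} determined by tangentially differentiating the Neumann condition; only the mixed derivatives $u_{\tau\nu}$ are. Your additive term $\alpha_1 w$ with $w=\langle Du,\nu\rangle-\tilde\varphi$ does not cure this: since $w_\nu=u_{\nu\nu}+O(|Du|)$, the boundary inequality $\Phi_\nu\ge 0$ becomes
\[
\Big(\tfrac{2\varphi}{|Du|^2}+\alpha_1\Big)u_{\nu\nu}+O(1)+O(\alpha_1|Du|)-\alpha_2\ \ge\ 0,
\]
and no constant $\alpha_1$ can make the $u_{\nu\nu}$ coefficient vanish (it would have to equal $-2\varphi/|Du|^2$, which depends on $x_0$). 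At this stage of the argument there is no control on $u_{\nu\nu}$, so the inequality cannot be turned into a bound on $|Du|$ by choosing $\alpha_2$ large. A smaller issue: your claim that ``$\alpha_1 w$ has $C^2$ norm bounded in terms of $|\varphi|_{C^2}$ and $|u|_{C^0}$'' is false, since $w_{ij}$ contains third derivatives of $u$; in case (i) one would need to pass these through the equation and then absorb an extra $\sum_i F^{ii}|u_{ii}|$ cross term, which you have not addressed.

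The paper avoids the boundary obstruction by replacing $\log|Du|^2$ with $\log|Dw|^2$, where $w=u+\varphi(x,u)\,d(x)$. The point is that $D_\nu w=u_\nu+\varphi\,D_\nu d=u_\nu-\varphi=0$ on $\partial\Omega$, so at a boundary maximum
\[
(\log|Dw|^2)_\nu=\frac{2\sum_\alpha w_\alpha w_{\alpha\nu}}{|Dw|^2}
\]
involves only tangential indices $\alpha$, and each $w_{\alpha\nu}$ is shown to satisfy $|w_{\alpha\nu}|\le C|Du|$ using the boundary condition and the second fundamental form; no $u_{\nu\nu}$ ever appears. With this modification the boundary case closes by taking the coefficient $A$ of $d$ large. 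The interior case (your case (i)) then uses the coercive term $(h''-2h'^2)F^{ij}u_iu_j\ge \delta C_p|Du|^2\sum_iF^{ii}$ with $h(u)=\delta(u+C_0)^2$, and $\mu$ is chosen small so that the error terms carrying a factor of $d\le\mu$ are absorbed. Your proposal would work if you adopted this modified gradient in the logarithm; as written, case (iii) does not close.
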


\begin{proof}
We define the following auxiliary function
$$G(x)=\log |D w|^2+h(u)+Ad(x).$$
where $w(x,u)=u(x)+\varphi(x,u)d(x)$, $h(u)=\delta(u+C_0)^2$ with $C_0\geq |u|_{C^0}+1$ and $\delta<\frac{1}{32{C_0}^2}$, $A$ needs to be confirmed later.

Assuming that $G$ reaches its maximum value at point $x_0$, we only need to show that
$$|D u|(x_0) \leq C .$$
Without loss of generality, we assume
$$\mu <\frac{1}{4|\varphi|_{C^1}}\quad\mbox{and} \quad |D u|(x_0) \geq 8|d|_{C^2}|\varphi|_{C^1}+2.$$
then
\begin{eqnarray}\label{1.3.1}
1<\frac{1}{2}|D u|(x_0) \leq |D w|(x_0) \leq \frac{3}{2}|D u|(x_0) .
\end{eqnarray}
We prove the theorem  in three cases.

\textbf{Case 1:} If $x_0\in \partial \Omega$, select the local orthogonal coordinate system at $x_0 $ so that $e_n=\nu$ and $e_1,\cdots, e_{n-1}$ are tangent to boundary  $\partial \Omega$. Then at point $x_0$
\begin{eqnarray}\label{3.1}
0\leq G_\nu=\frac{D_\nu |D w|^2}{|D w|^2}+h'D_\nu u+AD_\nu d,
\end{eqnarray}
Note that $D_\nu d=-1$ and $d=0$ on $\partial \Omega$,
$$D_\nu w=D_\nu u+\varphi D_\nu d=0$$
So  the inequality \eqref{3.1} can be rewritten as
$$0\leq \frac{2D_\alpha w D_{\nu\alpha} w}{|D w|^2}+h'D_\nu u-A,$$
where $\alpha$ represents the sum of 1 to $n-1 $. According to the boundary conditions, we have
\begin{eqnarray*}
\begin{split}
D_{\nu\alpha} w&=D_{\alpha\nu}u+dD_{\alpha\nu} \varphi+\varphi D_{\alpha\nu} d+D_\alpha \varphi D_\nu d+D_\alpha dD_\nu \varphi\\
&=-D_{D_{\alpha}\nu}u+\varphi D_{\alpha\nu} d +D_\alpha dD_\nu \varphi.
\end{split}
\end{eqnarray*}
By \eqref{1.3.1}, it is easy to show
$$\left|D_{\nu\alpha} w \right|\leq C|D u|,$$
where $C$ is a positive constant depending on $|\varphi|_{C^1}$ and $\Omega$.
Therefore, we have
$$0\leq C+h'|\varphi|_{C^0}-A,$$
 we can get a contradiction by choosing $A>(2C + \frac {|\varphi|_{C^0}}{4C_0})$. Thus, $| D u |(x_0) \leq C $.

\textbf{Case 2:} If $x_0\in \Omega_\mu$, calculate the partial derivative of $G $ twice at $x_0 $
\begin{eqnarray}\label{eq322}
0=G_j=\frac{2\sum_k w_kw_{jk}}{|D w|^2}+h'u_j+Ad_j,
\end{eqnarray}
and
\begin{eqnarray}\label{eq323}
\begin{split}
0\geq G_{ij}=&\frac{2\sum_k w_kw_{ijk}}{|D w|^2}+\frac{2\sum_k w_{ik} w_{jk}}{|D w|^2}-\frac{4\sum_k w_kw_{jk}\sum_l w_lw_{li}}{|D w|^4}\\
&+h''u_i u_j +h'u_{ij}+Ad_{ij} .
\end{split}
\end{eqnarray}
Combining with \eqref{eq322} and \eqref{eq323}, we have
\begin{eqnarray}\label{eq324}
&&0\geq F^{ij} G_{ij}\\
\nonumber&& =\frac{2}{|D w|^2} F^{ij}\sum_k w_k w_{ijk} +\frac{2}{|D w|^2}F^{ij}\sum_k w_{ik} w_{jk}-F^{ij}(h'u_j+Ad_j)(h'u_i+Ad_i)\\
\nonumber&&+F^{ij}h''u_i u_j +F^{ij}h'u_{ij} +F^{ij}Ad_{ij} \\
\nonumber&&\geq\frac{2}{|D w|^2} F^{ij}\sum_k w_k w_{ijk} +(h''-2h'^2)F^{ij}u_i u_j-2A^2F^{ij}d_i d_j \\
\nonumber&&+h'F^{ij}u_{ij}+AF^{ij}d_{ij}.
\end{eqnarray}
For convenience, we denote $\Phi(x,u)=\varphi(x,u)d(x)$, then $w=u-\Phi$. Directly calculate the partial derivative of $w $ to obtain
\begin{eqnarray*}
w_k =u_k -\Phi_u u_k -\Phi_{x_k}=(1-\Phi_u)u_k -\Phi_{x_k},
\end{eqnarray*}
Differentiating $w_k$ again, we have
$$w_{jk} =(1-\Phi_u)u_{kj} -(\Phi_{ux_j}+\Phi_{uu}u_j )u_k  -\Phi_{x_kx_j}-\Phi_{x_k u}u_j .$$
Hence
\begin{eqnarray*}
\begin{split}
&(1-\Phi_u)\sum_k w_k u_{jk}\\
&=\sum_k w_kw_{kj}+(\Phi_{ux_j}+\Phi_{uu}u_j )\sum_k w_ku_k+ \sum_k \Phi_{x_kx_j}w_k+\sum_k\Phi_{x_k u}u_jw_k\\
&=-\frac{1}{2}|D w|^2(h'u_j+Ad_j)+\sum_k(\Phi_{uu}u_jw_ku_k+\Phi_{ux_j}w_ku_k+\Phi_{x_kx_j}w_k+\Phi_{x_k u}u_jw_k),
\end{split}
\end{eqnarray*}
Note that $|\Phi_u|+|\Phi_{x_k}|+|\Phi_{ux_k}|+|\Phi_{uu}|+|\Phi_{x_kx_j}|\leq C$.
Combining with \eqref{1.3.1} and the fact $h'(u) \leq 4 \delta C_0$, assuming $|Du|(x_0) > \max\{A, 8|d|_{C^2}|\varphi|_{C^1}+2, M_1\} $,
we obtain
\begin{eqnarray*}
\begin{split}
\left|\sum_k w_k u_{kj}\right|&\leq \frac{C}{1-\mu|\varphi|_{C^1}}(4 \delta C_0|D u|^3+A|Du|^2+ |Du|^3+|Du|^2+|Du|)\\
&\leq C|Du|^3
\end{split}
\end{eqnarray*}
From the growth \textbf{Condition \ref{growth}}, we obtain
$$ \left|F^{ij}u_{ijk} w_k \right| =\left| w_k(\tilde{\psi}_{x_k}+\tilde{\psi}_u u_k)  +\sum_l \tilde{\psi}_{p_l}u_{kl} w_k\right| \leq C|D u|^{3+\gamma}.$$
Note that
\begin{align*}
\begin{split}
w_{ijk} =&(1-\Phi_u) u_{ijk}-(\Phi_{ux_i}+\Phi_{uu} u_i) u_{jk}-(\Phi_{ux_j}+\Phi_{uu} u_j) u_{ik}\\
&-(\Phi_{ux_jx_i}+\Phi_{ux_ju} u_i+\Phi_{uux_i} u_j+\Phi_{uuu} u_i u_j+\Phi_{uu} u_{ij}) u_k\\
&-\Phi_{x_ku} u_{ij}-(\Phi_{x_kux_i}+\Phi_{x_kuu} u_i) u_j-\Phi_{x_kx_jx_i}-\Phi_{x_kx_ju} u_i.
\end{split}
\end{align*}
Then
\begin{align*}
\begin{split}
&F^{ij}\sum_k w_kw_{ijk}\\
=&(1-\Phi_u)F^{ij}\sum_k w_k u_{ijk} -2F^{ij}\sum_k w_ku_{jk}(\Phi_{ux_i}+\Phi_{uu}u_i )-F^{ij}u_{ij}\sum_k w_k (\Phi_{uu}u_k+\Phi_{x_ku})\\
&- F^{ij}\sum_k w_ku_k(\Phi_{ux_jx_i}+\Phi_{ux_ju} u_i+\Phi_{uux_i} u_j+\Phi_{uuu} u_i u_j )\\
&-F^{ij} \sum_k w_k u_j(\Phi_{x_kux_i}+ \Phi_{x_kuu} u_i)-F^{ij}\sum_k w_k\Phi_{x_kx_jx_i}-F^{ij} \sum_k w_ku_i\Phi_{x_kx_ju}\\
\geq &-C|D u|^{3+\gamma}- C \tilde{\psi} (\mu |Du|^2+|Du|)- C \mu \sum\limits_i F^{ii} |D u|^4- C\sum\limits_i F^{ii} |D u|^3\\
\end{split}
\end{align*}
According to the definition $h(u)=\delta(u+C_0)^2$, taking $|u|_{C^0}+1\leq C_0$. Then
$$h'(u)=2\delta(u+C_0)\geq 2\delta, \quad h''(u)=2\delta.$$
Due to $\delta<\frac{1}{32C_0^2}$, we have
$$h''-2h'^2=2\delta-8\delta^2(u+C_0)^2>\delta>0.$$
Without loss of generality, we assume that $\{u_ {ij} (x_0)\} $ is diagonal with
$$ u_{11}(x_0)\geq u_{22}(x_0)\geq\cdots\geq u_{nn}(x_0)$$
According to proposition \ref{P4} (\romannumeral3), we obtain
$$F^{ij}u_i u_j =F^{ii}(u_i)^2\geq F^{11}\sum_i u_i^2  \geq C_p|D u|^2\sum\limits_i F^{ii}.$$
where $C_p$ is a positive constant defined in  Proposition \ref{P4}(\romannumeral3).\\

Choosing
$$\mu< \min \{\frac{1}{4|\varphi|_{C^1}}, \frac{C_p \delta}{4C}, \frac{ \delta}{C}\}, \quad |Du|(x_0) \geq \max\left\{ \frac{C}{\delta},~\left({\frac{4C}{C_p \delta}} +1\right)A,~ 8|d|_{C^2}|\varphi|_{C^1}+2, M_1 \right\}.$$
Then  \eqref{eq324} implies that
\begin{eqnarray*}
\begin{split}
0\geq &\bigg(- C|D u|^{1+\gamma} -C\tilde{\psi} (\mu+\frac{1}{|Du|})-C \mu \sum_i F^{ii}|D u|^2- C\sum_i F^{ii}|D u|\bigg) \\
&+C_p\delta\sum_i F^{ii} |Du|^2-CA^2\sum_iF^{ii} +2\delta\tilde{\psi} \\
\geq &\sum_i F^{ii}\bigg(C_p\delta|Du|^2-C\mu |D u|^2-C|D u|-CA^2\bigg)+\tilde{\psi}(2\delta-C\mu-\frac{C}{|Du|})-C|D u|^{1+\gamma}\\
\geq&\frac{\tilde{C}_pC_p \delta}{4} |Du|^2-C|D u|^{1+\gamma}
\end{split}
\end{eqnarray*}
where the constant $\tilde{C}_p$ is defined in Proposition \ref{P4}(\romannumeral4).
 Thus, $| D u |(x_0) \leq C $.

\textbf{Case 3:} If $x_0\in\partial \Omega_\mu\backslash\partial \Omega$, from theorem \ref{C11}, we can know that $|D u|(x_0)$ is bounded.

Combining with Case 1, Case 2 and Case 3, the boundary gradient estimates is completed.
\end{proof}

In summary, we can obtain
$$\underset{\bar{\Omega}}{\sup}|D u|\leq C.$$
where $C$ is a positive constant depending on $|u|_{C^0}, |\varphi|_{C^2}, C_1, M_1,  \mu, n, k, l, p$ and $\Omega $.

\
\section{global estimates for second-order derivatives}
In this section, we consider the global second-order  derivatives estimates for the $(\Lambda, k)$-convex solution of equation \eqref{(1.4)}. More precisely, we reduce the global  second-order  derivatives estimates into the boundary  double normal estimates, and establish double normal estimates on the boundary.

\subsection{Reduce global  second-order derivatives to double normal  second-order derivatives on the boundary}

\begin{theorem}\label{C21}
Let $u\in C^4(\Omega)$~be a ~$(\Lambda,k)$-convex solution  of equation \eqref{(1.4)},~then
$$
\sup\limits_{(x,\zeta)\in \Omega\times{\mathbb{S}^{n-1}}}D_{\zeta \zeta}u(x)\leq C(1+\max\limits_{\partial\Omega}|D_{\nu\nu}u|),
$$
where~$C$~is a positive constant depending on $n, k, l, p, |u|_{C^1}, |\tilde{\psi}|_{C^2}, |\varphi|_{C^3}$  and $ \Omega$.
\end{theorem}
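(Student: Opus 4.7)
The plan is to prove this global-to-boundary reduction by a Pogorelov-style maximum principle on $\bar\Omega\times\uS$. Set
$$W(x,\zeta):=\log\bigl(D_{\zeta\zeta}u(x)\bigr)+\tfrac{a}{2}|Du(x)|^2+b\,u(x),$$
with positive constants $a,b$ to be chosen, and let $(x_0,\zeta_0)$ be a maximizer (we may of course assume $\sup D_{\zeta\zeta}u>1$, otherwise the statement is trivial). By rotating the coordinate frame I may assume $\zeta_0=e_1$ and that $D^2u(x_0)$ is diagonal with $u_{11}(x_0)=\lambda_{\max}(D^2u)(x_0)$; then the function $\widetilde W(x):=\log u_{11}(x)+\tfrac{a}{2}|Du|^2+bu$ also attains its maximum at $x_0$, up to a locally non-positive perturbation.

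If $x_0\in\Omega$, I extract the first-order condition $\widetilde W_i(x_0)=0$ and the second-order inequality $F^{ij}\widetilde W_{ij}(x_0)\leq 0$. Differentiating the equation $F(D^2u)=\tilde\psi$ once and twice in the $e_1$-direction produces the identities
$$F^{ij}u_{ij1}=D_1\tilde\psi,\qquad F^{ij}u_{ij11}+F^{ij,kl}u_{ij1}u_{kl1}=D_{11}\tilde\psi,$$
into which I substitute the first-order condition to eliminate the ratio $u_{11i}/u_{11}$. The curvature term $F^{ij,kl}u_{ij1}u_{kl1}$ is discarded using Proposition \ref{P7} together with the concavity of $F$ from Proposition \ref{P4}(iv), both pieces being non-positive. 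The ellipticity lower bound $F^{11}\geq C_p\sum_iF^{ii}$ from Proposition \ref{P4}(iii) — precisely the point at which the hypothesis $k\leq C_{n-1}^{p-1}$ is used — combined with the uniform trace estimate $\sum_iF^{ii}\geq\tilde C_p>0$ from Proposition \ref{P4}(iv), produces a dominant positive term proportional to $a\,C_p\tilde C_p\,u_{11}^2$ in the resulting differential inequality. Choosing $a$ small and $b$ large enough depending only on $|u|_{C^1}$ and $|\tilde\psi|_{C^2}$ forces $u_{11}(x_0)\leq C$, yielding the interior-maximum case of the theorem.

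If $x_0\in\partial\Omega$, I decompose $\zeta_0=\alpha\,\nu(x_0)+\beta\,\tau$ with $\tau$ a unit tangent vector and $\alpha^2+\beta^2=1$, so that
$$M:=u_{\zeta_0\zeta_0}(x_0)=\alpha^2 u_{\nu\nu}(x_0)+2\alpha\beta\, u_{\nu\tau}(x_0)+\beta^2 u_{\tau\tau}(x_0).$$
Differentiating the Neumann condition $u_\nu=\varphi(x,u)$ along $\tau$ gives $u_{\nu\tau}(x_0)=\tau(\varphi)-u_l(D_\tau\nu)^l$, hence $|u_{\nu\tau}(x_0)|\leq C(1+|Du|_{C^0})$ with $C$ depending on $|\varphi|_{C^1}$ and the second fundamental form of $\partial\Omega$. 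Since $M$ is the global maximum, $u_{\tau\tau}(x_0)\leq M$, and the identity therefore reduces to $\alpha^2 M\leq \alpha^2\max_{\partial\Omega}|u_{\nu\nu}|+C|\alpha|$, which directly yields the claimed bound whenever $|\alpha|$ is bounded below.

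The main obstacle is the degenerate regime $\alpha\to 0$ of a purely tangential maximum eigenvector: second tangential derivatives of $u$ on $\partial\Omega$ are not controlled by the Neumann data, so the previous decomposition becomes vacuous. To handle it I would upgrade the auxiliary function by a boundary barrier of the form $c\,d(x)$, with $d$ the distance to $\partial\Omega$ and $c$ chosen large enough, and then invoke Hopf's boundary-point lemma together with a twice-tangentially-differentiated Neumann identity to reach a contradiction unless $M\leq C$. The delicate point is to calibrate the constant $c$ so that the extra terms introduced in the interior computation can still be absorbed by the positive $a\,C_p\tilde C_p\,u_{11}^2$ contribution supplied by Proposition \ref{P4}.
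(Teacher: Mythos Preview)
Your interior maximum-principle argument is sound in spirit and rests on the same structural inputs as the paper (Propositions~\ref{P4}(iii)--(iv) and~\ref{P7}), although the paper works with the test function $\Phi=h(R)(D_{\zeta\zeta}u-v)+B|Du|^2$ rather than your logarithmic one, and takes the gradient coefficient $B$ large rather than small.

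The genuine gap is in your boundary case. Your decomposition $M=\alpha^2u_{\nu\nu}+2\alpha\beta u_{\nu\tau}+\beta^2u_{\tau\tau}$ only yields $M\le |u_{\nu\nu}|+C/|\alpha|$, which degenerates as $\alpha\to 0$; on the other hand, the Hopf argument you sketch with an additive barrier $c\,d(x)$ is only usable when $\zeta_0$ is \emph{exactly} tangential, because for $\alpha\neq 0$ the quantity $D_\nu u_{\zeta_0\zeta_0}$ contains the uncontrolled third-order term $\alpha^2 u_{\nu\nu\nu}$. The intermediate regime $0<|\alpha|\ll 1$ is covered by neither argument, and a $\zeta$-independent barrier cannot separate them. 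The paper closes this gap by subtracting
\[
v(x,\zeta)=2\langle\zeta,\nu\rangle\bigl\langle\zeta-\langle\zeta,\nu\rangle\nu,\ D_x\varphi+\varphi_u\,Du-D_lu\,D\nu^l\bigr\rangle,
\]
which on $\partial\Omega$ equals precisely the mixed term $2\alpha\beta\,u_{\nu\tau}$ via the Neumann condition. This makes $\Phi$ satisfy the exact convex combination $\Phi(x_0,\zeta_0)=\beta^2\Phi(x_0,\tau)+\alpha^2\Phi(x_0,\nu)$ at boundary points, whence $\Phi(x_0,\zeta_0)\le\Phi(x_0,\nu)$ for every non-tangential $\zeta_0$, reducing that subcase directly to the double-normal bound. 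The purely tangential subcase is then handled by Hopf, with the \emph{multiplicative} weight $h(R)=e^{-AR}$ (not an additive barrier) supplying the large coefficient $A>2\max_{\partial\Omega}|\Pi|+|\varphi|_{C^2}$ needed to dominate the second-fundamental-form contributions in $D_\nu\Phi$.
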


\begin{proof}
Define $h(R)=e^{-AR}$ with $R(x)\in C^2(\bar{\Omega})$, $R|_{\partial \Omega}=0$ and  $D_\nu R=1$ on $\partial\Omega$. $A=1+2\max_{\partial \Omega} \{|\Pi_{ij}|\}+|\varphi|_{C^2}$, $|\Pi_{ij}|$ is the second fundamental form of the boundary. We consider such an auxiliary function
$$ \Phi(x,\zeta)=h(R)(D_{\zeta\zeta}u-v( x,\zeta))+B|Du|^2,$$
where $v(x,\zeta)=2\langle \zeta,\nu\rangle\langle \zeta', D_x\varphi+D_u\varphi D u-D_luD \nu^l\rangle\equiv a^lD_l u+b$, $\nu\in \mathbb{S}^{n-1}$ is a $C^3(\overline{\Omega})$ extension of the outer unit normal vector field on $\partial \Omega$,  $\zeta'=\zeta-\langle \zeta,\nu\rangle\nu$, $a^l=2\langle \zeta,~\nu \rangle(D_u\varphi(\zeta')^l-\langle\zeta',D\nu^l \rangle)$
, $b=2\langle \zeta,\nu\rangle\langle \zeta',D_x\varphi\rangle$  and $B$ is a positive constant that needs to be confirmed.

Suppose $\Phi(x,\zeta)$ achieves the maximum at $(x_0,\zeta_0)$, and hence
 $\Phi(x,\zeta_0)$ achieves the maximum at point $x_0$.

 We now separate into two cases.

\textbf{Case 1: }$x_0\in \Omega.$
Rotating the coordinate system at point $x_0$ so that $D ^ 2u $ is diagonal and $u_{11}(x_0)\geq u_{22}(x_0)\geq \cdots\geq u_{nn}(x_0)$. Calculate the partial derivative of $\Phi $  at   $x_0$,
$$0=\Phi_i(x_0)=h'R_i\bigg(D_{\zeta_0\zeta_0}u-v\bigg)+h\bigg(D_{\zeta_0\zeta_0}u-v\bigg)_i+2B\sum_ku_ku_{ki}.$$
Then
\begin{eqnarray}\label{eq421}
\begin{split}
\bigg(D_{\zeta_0\zeta_0}u-v\bigg)_i=-\frac{h'R_i\bigg(D_{\zeta_0\zeta_0}u-v\bigg)+2B\sum\limits_ku_ku_{ki}}{h}.
\end{split}
\end{eqnarray}
Calculate the second-order partial derivative of $\Phi$ at $x_0$, we have
\begin{eqnarray*}
\begin{split}
0\geq &F^{ij}\Phi_{ij}\\
=&(h''R_iR_j+h'R_{ij})F^{ij}\bigg(D_{\zeta_0\zeta_0}u-v\bigg)+hF^{ij}\bigg(D_{\zeta_0\zeta_0}u-v\bigg)_{ij}+2BF^{ij}\sum_k( u_{ki}u_{kj}+u_ku_{kij})\\
&+h'R_iF^{ij}\bigg(D_{\zeta_0\zeta_0}u-v\bigg)_j+h'R_jF^{ij}\bigg(D_{\zeta_0\zeta_0}u-v\bigg)_i\\
=&hF^{ij}u_{ij\zeta_0\zeta_0}-hF^{ij}v_{ij}+\bigg(h''-2\frac{h'^2}{h}\bigg)F^{ij}R_iR_j(D_{\zeta_0\zeta_0}u-v)+h'F^{ij}R_{ij}(D_{\zeta_0\zeta_0}u-v)\\
&-4BF^{ij}\frac{h'}{h}\sum_ku_ku_{ki}R_j
+2BF^{ij}\sum_k u_{ki}u_{kj}+2B\sum_k u_k \tilde{\psi}_k,
\end{split}
\end{eqnarray*}

According to proposition  \ref{P7} and \eqref{eq421}, we have
\begin{eqnarray*}
\begin{split}
F^{ij}u_{ij\zeta_0\zeta_0}\geq & \tilde{\psi}_{\zeta_0\zeta_0}\\
=&\tilde{\psi}_{x_ix_j}({\zeta_0})^i ({\zeta_0})^j +\tilde{\psi}_{x_iu} u_{\zeta_0}({\zeta_0})^i+\sum_l\tilde{\psi}_{x_ip_l}u_{l\zeta_0}({\zeta_0})^i +\tilde{\psi}_u u_{\zeta_0\zeta_0}+\tilde{\psi}_{u x_i}u_{\zeta_0}({\zeta_0})^i\\
&+\tilde{\psi}_{uu}u_{\zeta_0}^2
+\sum_l \tilde{\psi}_{up_l}u_{\zeta_0}u_{l\zeta_0}
+\sum_l \tilde{\psi}_{p_lx_i}u_{l\zeta_0}({\zeta_0})^i
+\sum_l\tilde{\psi}_{p_lu}u_{\zeta_0}u_{l\zeta_0}\\
&+\sum_l\sum_t\tilde{\psi}_{p_lp_t}u_{l\zeta_0}u_{t\zeta_0}
-\sum_l \tilde{\psi}_{p_l}\bigg(\frac{h'R_l\bigg(D_{\zeta_0\zeta_0}u-v\bigg)+2B\sum\limits_ku_ku_{kl}}{h}-v_l\bigg)\\
\geq&-C(1+u_{11}^2)-\sum_l \tilde{\psi}_{p_l}\bigg(\frac{h'R_l\bigg(D_{\zeta_0\zeta_0}u-v\bigg)+2B\sum\limits_ku_ku_{kl}}{h}\bigg)
\end{split}
\end{eqnarray*}
Hence
\begin{eqnarray}\label{eq423}
\begin{split}
&hF^{ij}u_{ij\zeta_0\zeta_0}+2B\sum_k u_k\tilde{\psi}_k\\
&\geq -Ch(1+u_{11}^2)-\sum_l \tilde{\psi}_{p_l}\bigg(h'R_l(D_{\zeta_0\zeta_0}u-v)+2B\sum\limits_ku_ku_{kl}\bigg)\\
&~~+2B\sum_k u_k(\tilde{\psi}_{x_k}+\tilde{\psi}_u u_k+\sum_l \tilde{\psi}_{p_l}u_{kl})\\
&\geq-Ch(1+u_{11}^2)-CAhu_{11}-CAh-CB.
\end{split}
\end{eqnarray}
where $C$ is a positive constant depending on $|\tilde{\psi}|_{C^2}, |u|_{C^1}, |\varphi|_{C^2}$ and $\Omega$.

According to the Cauchy-Schwarz inequality, it is easy to see that
$$ 4BF^{ij}\sum_k u_ku_{ki}\frac{h'}{h}R_j+2F^{ij}ha_i^l(D_l u)_j \leq  F^{ii} u_{ii}^2+CA^2B^2\sum_i F^{ii}+Ch^2\sum_i F^{ii},$$
then
\begin{eqnarray}\label{eq424}
&&-hF^{ij}v_{ij}-4B\frac{h'}{h}F^{ij}\sum_k u_ku_{ki}R_j\\
\nonumber&&=-hF^{ij}(a^l_{ij}D_l u+a^l(D_l u)_{ij}+b_{ij})-2hF^{ij}a_i^l(D_lu)_j-4B\frac{h'}{h}F^{ij}\sum_k u_ku_{ki}R_j\\
\nonumber&&\geq-Ch\tilde{\psi} -Ch\sum_i F^{ii}-Ch\tilde{\psi}_l-F^{ii} u_{ii}^2
-CA^2B^2\sum_i F^{ii}-Ch^2\sum_i F^{ii}\\
\nonumber&&\geq-C(h+h^2+A^2B^2)\sum_i F^{ii}-F^{ii} u_{ii}^2-Ch(1+u_{11})
\end{eqnarray}
In fact, $h'(R)=-Ae^{-AR}=-Ah$, $h''(R)=A^2e^{-AR}=A^2h$, then
$$h''-2\frac{h'^2}{h}=-A^2h,$$
Combine  \eqref{eq423}, \eqref{eq424} and $h\leq Ce^{CA}$ to get
\begin{eqnarray}\label{eq425}
\begin{split}
0\geq &F^{ij}\Phi_{ij}\\
\geq& -Ch(1+u_{11}^2)-CAhu_{11}-CAh-CB-C(h+h^2+A^2B^2)\sum_i F^{ii}\\
&-F^{ii} u_{ii}^2-Ch(1+u_{11})-CA^2h\sum_i F^{ii}(1+u_{11})+2BF^{ii} u_{ii}^2\\
\geq& (2B-1)F^{ii} u_{ii}^2-C\sum_i F^{ii}(e^{CA}+e^{2CA}+A^2B^2+A^2e^{CA}(1+u_{11}))\\
&-Ce^{CA}(1+u_{11}^2)-CAe^{CA}u_{11}-CAe^{CA}-CB\\
\geq&(2B-1)F^{ii} u_{ii}^2-C\sum_i F^{ii}(A^2e^{2CA}+A^2B^2+A^2e^{CA}u_{11})\\
&-CAe^{CA}(1+u_{11}^2)-CB
\end{split}
\end{eqnarray}
According to proposition \ref{P4} (\romannumeral3) and proposition \ref{P4} (\romannumeral4), \eqref{eq425} becomes

$$0\geq  C_pu_{11}^2(2B-1)-C(A^2e^{2CA}+A^2B^2+A^2e^{CA}u_{11})-(\frac{CAe^{CA}}{\tilde C_p}(1+u_{11}^2)+\frac{CB}{\tilde C_p}).$$
Choosing $B>3\left(\frac{CAe^{CA}}{\tilde C_p C_p}+\frac{CA^2e^{CA}}{C_p}\right)+1$ and assuming $u_{11}>1$, then we have
$$0\geq  C_pBu_{11}^2-C(A^2e^{2CA}+A^2B^2+\frac{CB}{\tilde C_p}).$$
Thus
$$u_{11} \leq C ,$$
where $C$  is a positive constant depending on $n, k, l, p, |\tilde{\psi}|_{C^2}, |u|_{C^1}, |\varphi|_{C^3}$ and $\Omega$.

Subsequently, we obtain
$$ |D_{\zeta_0\zeta_0}u|\leq C.$$

\textbf{Case 2: }$x_0\in \partial \Omega.$
This situation can be divided into two subcases.

\textbf{Subcase 2.1: }$\zeta_0$ is a non-tangential vector of $x_0\in\partial \Omega$. Denote by
$$\zeta_0=\langle \zeta_0,\theta\rangle\theta+\langle\zeta_0,\nu\rangle\nu,$$
where $\theta$ is a tangential vector, i.e. $\langle\theta, \nu\rangle=0$, then
$$D_{\zeta_0} u(x_0)=\langle \zeta_0,\theta \rangle D_\theta u(x_0)+\langle \zeta_0,\nu \rangle D_\nu u(x_0),$$
and
\begin{eqnarray*}
\begin{split}
D_{\zeta_0\zeta_0}u(x_0)&=\langle\zeta_0,\theta  \rangle^2D_{\theta\theta}u(x_0)+\langle \zeta_0,\nu \rangle^2D_{\nu\nu}u(x_0)+2\langle\zeta_0,\theta \rangle\langle\zeta_0,\nu \rangle D_{\theta\nu}u(x_0)\\
&=\langle\zeta_0,\theta  \rangle^2D_{\theta\theta}u(x_0)+\langle \zeta_0,\nu \rangle^2D_{\nu\nu}u(x_0)+2\langle\zeta_0,\nu \rangle[\zeta_0-\langle\zeta_0,\nu\rangle\nu][D\varphi-D_luD\nu^l]\\
&=\langle\zeta_0,\theta  \rangle^2D_{\theta\theta}u(x_0)+\langle \zeta_0,\nu \rangle^2D_{\nu\nu}u(x_0)+v(x_0,\zeta_0)
\end{split}
\end{eqnarray*}
By the fact $\Phi(x_0,\theta)=h(R)D_{\theta\theta}u+B|Du|^2$ and $\Phi(x_0,\nu)=h(R)D_{\nu\nu}u+B|Du|^2$, we can get
\begin{eqnarray*}
\begin{split}
\Phi(x_0,\zeta_0)&=\langle\zeta_0, \theta  \rangle^2\Phi(x_0,\theta)+\langle \zeta_0, \nu \rangle^2\Phi(x_0,\nu)\\
&\leq \langle\zeta_0,\theta  \rangle^2 \Phi(x_0,\zeta_0)+\langle \zeta_0,\nu \rangle^2\Phi(x_0,\nu),
\end{split}
\end{eqnarray*}
it implies that $\Phi(x_0,\zeta_0)\leq \Phi(x_0,\nu)$ and then
$$D_{\zeta_0\zeta_0}u\leq C(1+\max_{\partial \Omega}|D_{\nu\nu}u|).$$

\textbf{Subcase 2.2: }$\zeta_0$ is a tangential vector of $x_0\in\partial \Omega$, we have
\begin{eqnarray*}
\begin{split}
D_{\zeta_0\zeta_0}D_\nu u&=D_{\zeta_0\zeta_0}(D_ku\nu^k)=D_{\zeta_0}(D_k D_{\zeta_0}u\nu^k+D_k u D_{\zeta_0}\nu^k)\\
&=D_k D_{\zeta_0\zeta_0}u \nu^k+2D_kD_{\zeta_0}u D_{\zeta_0}\nu^k+D_k u D_{\zeta_0\zeta_0}\nu^k\\
&=D_\nu D_{\zeta_0\zeta_0}u+2D_{\zeta_0}D_ku D_{\zeta_0}\nu^k+D_k u D_{\zeta_0\zeta_0}\nu^k
\end{split}
\end{eqnarray*}
then
\begin{eqnarray*}
\begin{split}
0\leq &D_\nu \Phi(x_0,\zeta_0)=D_\nu\bigg(h(D_{\zeta_0\zeta_0}u-a^lD_lu-b)+B|Du|^2\bigg)\\
=&-A(D_{\zeta_0\zeta_0}u-a^lD_lu-b)+D_\nu D_{\zeta_0\zeta_0}u\\
&-D_\nu a^lD_lu-a^lD_\nu D_l u-D_\nu b+2BD_kuD_\nu D_k u\\
\leq &-AD_{\zeta_0\zeta_0}u+D_\nu D_{\zeta_0\zeta_0}u+CB|D_\nu D_ku|+C(1+A)\\
=&-AD_{\zeta_0\zeta_0}u+ D_{\zeta_0\zeta_0}D_\nu u-2D_{\zeta_0}D_kuD_{\zeta_0}\nu^k-D_{\zeta_0\zeta_0}\nu^kD_k u\\
&+CB|D_\nu D_k u|+C(1+A)\\
\leq & (-A+\varphi_{u})D_{\zeta_0\zeta_0}u-2D_{\zeta_0}D_ku D_{\zeta_0}\nu^k+CB|D_\nu D_k u|+C(1+A)\\
\end{split}
\end{eqnarray*}
Note that
\begin{eqnarray*}
\begin{split}
-2D_{\zeta_0}D_ku D_{\zeta_0}\nu^k
\leq2\max_{\partial\Omega} \{|\Pi_{ij}|\}|D_{\zeta_0\zeta_0}u|,
\end{split}
\end{eqnarray*}
 Setting $e_k=a\theta+b\nu$, and $\langle\theta,\nu\rangle=0$, we get
\begin{eqnarray*}
\begin{split}
|D_\nu D_k u|&=|D_\nu Du\cdot e_k|=|D_\nu Du\cdot(a\theta+b\nu)|\\
&=|D_\nu aD_\theta u+aD_\nu D_\theta u+D_\nu bD_\nu u+bD_{\nu\nu}u|\\
&\leq C+C|D_{\nu\nu}u|,
\end{split}
\end{eqnarray*}
so
\begin{eqnarray*}
\begin{split}
0\leq& (-A+\varphi_{u}+2\max_{\partial\Omega} \{|\Pi_{ij}|\})|D_{\zeta_0\zeta_0}u|+CB(1+A+|D_{\nu\nu}u|).
\end{split}
\end{eqnarray*}
Since $-A+\varphi_{u}+2\max_{\partial\Omega} \{|\Pi_{ij}|\}<-1$, then
$$|D_{\zeta_0\zeta_0}u|\leq C(1+|D_{\nu\nu}u|).$$
Thus, we get
\begin{eqnarray*}
\begin{split}
\Phi(x_0,\zeta_0)&=D_{\zeta_0\zeta_0}u+B|Du|^2 \leq C(1+\max_{\partial\Omega}|D_{\nu\nu}u|).
\end{split}
\end{eqnarray*}
where $C$ is a positive constant depending on $n, k, l, p, |u|_{C^1}, |\tilde{\psi}|_{C^2}, |\varphi|_{C^3}$ and $\Omega$.

To sum up, for any $(x,\zeta)\in \Omega\times \mathbb{S}^{n-1}$, we always have
\begin{eqnarray*}
\begin{split}
D_{\zeta\zeta}u&\leq \frac{\Phi(x,\zeta)-B|Du|^2}{h}+v(x,\zeta) \leq C(1+\max_{\partial\Omega}|D_{\nu\nu}u|).
\end{split}
\end{eqnarray*}
\end{proof}

\begin{theorem}\label{C22}
Let~$u\in C^4(\Omega) \cap C^3(\overline\Omega)$~be the $(\Lambda,k)$-convex solution of equation ~\eqref{(1.4)}. Denoting the tangential direction $\tau$ and the outer unit normal $\nu$ at any point $y \in \partial \Omega$,
then
$$
|D_{\tau\nu}u(y)|\leq C,
$$
where $C$ is a positive constant depending on $n, k, l, p, |u|_{C^1}, |\varphi|_{C^1}$ and $\Omega$.
\end{theorem}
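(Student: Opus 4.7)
The plan is to obtain the mixed tangential-normal estimate directly from an algebraic identity on $\partial\Omega$, obtained by differentiating the Neumann boundary condition $u_\nu=\varphi(x,u)$ tangentially. No PDE information, barrier, or auxiliary function is needed: the result is essentially a first-order identity on the boundary.

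First I would extend the outer unit normal $\nu$ to a $C^3$ vector field in a neighborhood of $\partial\Omega$ (for instance via the signed distance function, which is $C^3$ because $\partial\Omega\in C^4$). For any tangential direction $\tau$ at $y\in\partial\Omega$, the identity $u_i\nu^i-\varphi(x,u)=0$ holds on $\partial\Omega$, so its tangential derivative vanishes:
$$
\tau^k\partial_k\!\bigl(u_i\nu^i\bigr)=\tau^k\varphi_{x^k}+\varphi_u\,u_\tau\quad\text{at }y.
$$
Expanding the left-hand side gives $\tau^k\nu^i u_{ik}+u_i\tau^k\partial_k\nu^i$, hence
$$
D_{\tau\nu}u(y)=\tau^k\nu^i u_{ik}(y)=\tau^k\varphi_{x^k}+\varphi_u\,u_\tau-u_i\tau^k\partial_k\nu^i.
$$

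From this identity the bound
$$
|D_{\tau\nu}u(y)|\leq |\varphi|_{C^1}\bigl(1+|u|_{C^0}\bigr)+|u|_{C^1}\,\|D\nu\|_{C^0(\partial\Omega)}
$$
is immediate; the norm $\|D\nu\|_{C^0}$ depends only on $\Omega$. Since the global $C^1$-bound on $u$ from Section~4 depends on $n,k,l,p$ (through the constants $C_1,M_1$ and $\tilde C_p$), these parameters enter only through $|u|_{C^1}$, which matches the stated dependence of the constant.

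There is no substantial obstacle. The only item that requires attention is that the identity is derived for fields $\tau,\nu$ extended smoothly off $\partial\Omega$, but because tangential differentiation along the boundary is intrinsic, the value of $D_{\tau\nu}u(y)$ obtained is independent of the extension. Consequently the estimate holds at every $y\in\partial\Omega$ with the claimed dependence.
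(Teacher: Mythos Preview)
Your argument is correct and is precisely the approach underlying Lemma~12 of Ma--Qiu \cite{MQ19}, which is all the paper cites for this theorem: differentiate the Neumann condition $u_\nu=\varphi(x,u)$ in a tangential direction to obtain an identity expressing $D_{\tau\nu}u$ in terms of first-order data and the second fundamental form. One harmless slip: in your displayed bound the factor $|u|_{C^0}$ should be $|Du|_{C^0}$ (coming from $\varphi_u\,u_\tau$), but this is already absorbed in the stated dependence on $|u|_{C^1}$.
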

\begin{proof}
The idea of the proof is similar to the proof of Lemma 12 in \cite{MQ19}.
\end{proof}

\subsection{ Double normal estimates on the boundary}
\begin{theorem}\label{C23}
Let~$u\in C^4(\Omega) \cap C^3(\overline\Omega)$~be the $(\Lambda,k)$-convex solution of equation ~\eqref{(1.4)}. then
$$
\underset{\partial\Omega}{\max}|D_{\nu\nu}u|\leq C,
$$
where $C$ is a positive constant depending on $n, k, l, p, |u|_{C^1}, |\varphi|_{C^2}, |\tilde{\psi}|_{C^1}$  and $\Omega$.
\end{theorem}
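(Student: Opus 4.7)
The plan is to bound $D_{\nu\nu}u$ from both above and below on $\partial\Omega$ by constructing barriers in the boundary strip $\Omega_\mu$, in the spirit of Ma--Qiu \cite{MQ19}. I would smoothly extend the outer unit normal $\nu$ to a neighborhood of $\partial\Omega$ and introduce
$w(x) := u_i(x)\nu^i(x) - \varphi(x, u(x))$.
By the Neumann condition, $w \equiv 0$ on $\partial\Omega$, while a direct computation yields $D_\nu w = D_{\nu\nu}u + R$ on $\partial\Omega$, with $|R|$ controlled by $|u|_{C^1}$, $|\varphi|_{C^1}$, and the geometry of $\partial\Omega$. Thus the desired bound on $|D_{\nu\nu}u|$ on $\partial\Omega$ reduces to a bound on $|D_\nu w|$ there.

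For both the upper and lower bound, I would compare $w$ with barriers of the form $\mp A_1 d(x) \pm A_2 d(x)^2$ in $\Omega_\mu$, via the auxiliary functions $\Phi_\pm(x) := \pm w(x) - A_1 d(x) + A_2 d(x)^2$. The main task is to compute $F^{ij}(\Phi_\pm)_{ij}$. The most delicate piece is $F^{ij}(u_k\nu^k)_{ij}$, whose principal part $F^{ij}u_{ijk}\nu^k$ is handled by differentiating the equation $F(D^2u)=\tilde\psi$ in direction $e_k$, which yields $F^{ij}u_{ijk} = \tilde\psi_{x_k} + \tilde\psi_u u_k + \tilde\psi_{p_l} u_{lk}$. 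The otherwise uncontrolled second-derivative term $\tilde\psi_{p_l}u_{lk}\nu^k$ is tamed by differentiating the algebraic identity $u_k\nu^k = w + \varphi$ in $x_l$, giving $u_{lk}\nu^k = D_l w + D_l\varphi - u_k D_l\nu^k$; this rewrites the term in purely first-order quantities, which are controlled by the $C^1$ estimate from Section 4. Together with the elementary bounds on the other terms (from $|\varphi|_{C^2}$, $|u|_{C^1}$, $|\tilde\psi|_{C^1}$), one obtains $|F^{ij}w_{ij}| \leq C(1 + |Dw| + \sum_i F^{ii})$.

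Since $\sum_i F^{ii} \geq \tilde C_p > 0$ by Proposition \ref{P4}(iv), choosing $A_1$ sufficiently large makes $F^{ij}(\Phi_\pm)_{ij}$ of definite sign in $\Omega_\mu$, and choosing $A_2$ large enough forces $\Phi_\pm \geq 0$ on the inner face $\{d = \mu\}$. Since $\Phi_\pm$ vanishes on $\partial\Omega$, the maximum principle propagates $\Phi_\pm \geq 0$ throughout $\Omega_\mu$, and the Hopf-type inequality $D_\nu \Phi_\pm \leq 0$ on $\partial\Omega$ yields $|D_\nu w| \leq A_1$, whence $|D_{\nu\nu}u| \leq C$. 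The main obstacle is the careful bookkeeping of the third-order term $F^{ij}u_{ijk}\nu^k$ described above, together with the cross terms in $F^{ij}(u_k\nu^k)_{ij}$ of the form $F^{ij}u_{ki} D_j\nu^k$; these are absorbed by Cauchy--Schwarz together with the diagonal dominance $F^{ii} \geq C_p\sum_j F^{jj}$ furnished by the hypothesis $k \leq C^{p-1}_{n-1}$ through Proposition \ref{P4}(iii). Without this structural inequality, the constants in the barrier construction could not be chosen uniformly in $D^2 u$, and the argument would not close.
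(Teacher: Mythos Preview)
Your barrier scheme has a genuine gap at the step where you assert
\[
|F^{ij}w_{ij}|\leq C\Big(1+|Dw|+\sum_i F^{ii}\Big).
\]
Expanding $F^{ij}(u_k\nu^k)_{ij}$ produces, besides the third--order piece $F^{ij}u_{ijk}\nu^k$ (which you handle correctly), the cross term $2F^{ij}u_{ki}D_j\nu^k$. At a point where $D^2u$ is diagonal this equals $2\sum_i F^{ii}u_{ii}\,D_i\nu^i$, and the only available control on $|u_{ii}|$ is Theorem~\ref{C21}, namely $|u_{ii}|\leq C(1+L)$ with $L:=\max_{\partial\Omega}|D_{\nu\nu}u|$. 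Hence the honest estimate is
\[
|F^{ij}w_{ij}|\leq C(1+L)\sum_i F^{ii}+C(1+|Dw|),
\]
and neither Cauchy--Schwarz nor the comparability $F^{ii}\geq C_p\sum_j F^{jj}$ from Proposition~\ref{P4}(iii) removes the factor $(1+L)$: Cauchy--Schwarz only trades it for $\epsilon\sum_i F^{ii}u_{ii}^2$, which is of order $(1+L)^2\sum_i F^{ii}$. Consequently the constants $A_1,A_2$ in your barrier $\pm w-A_1 d+A_2 d^2$ must depend on $L$ to force a sign on $F^{ij}(\Phi_\pm)_{ij}$, and then the conclusion $|D_\nu w|\leq A_1$ is circular.

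The paper overcomes this exactly as in Ma--Qiu \cite{MQ19}: with $w=\langle Du,D\bar r\rangle-\varphi$ it sets
\[
Q=w+L^{-1/2}w^2+\tfrac12 L\,\bar r\quad\text{in }\Omega_\mu,
\]
where $\bar r=-d$. At an interior maximum the critical--point relation gives $w_i=-\tfrac12 L\,\bar r_i/(1+2L^{-1/2}w)$, so the quadratic term contributes
\[
2L^{-1/2}F^{ij}w_iw_j\;\geq\; c\,L^{3/2}\,F^{ij}\bar r_i\bar r_j\;\geq\; c\,C_p\,L^{3/2}\sum_i F^{ii},
\]
using Proposition~\ref{P4}(iii) in the last step. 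This $L^{3/2}$ gain dominates the $C(1+L)\sum_i F^{ii}$ coming from $F^{ij}w_{ij}$ and yields $L\leq C$; if the maximum is on $\partial\Omega$, Hopf's lemma closes the argument. Your linear--in--$d$ barrier lacks precisely this super--linear gain in $L$, which is why the argument does not close.
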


\begin{proof}
Setting $L=\underset{\partial \Omega}{\max} |D_{\nu\nu}u|$, the proof of the theorem is categorized into the following two cases.

\textbf{Case 1:} $\underset{\partial \Omega}{\max} |u_{\nu\nu}|=-\underset{\partial \Omega}{\inf}  u_{\nu\nu}=-u_{\nu\nu}(x_1)=L$.\\
Consider the auxiliary function:
\begin{eqnarray*}
Q=\langle Du, D\bar r \rangle-\varphi(x,u)+L^{-\frac{1}{2}}[\langle Du,D\bar r \rangle-\varphi(x,u)]^2+\frac{1}{2}L\bar r, \quad in~\Omega_\mu.
\end{eqnarray*}
where $\bar r=-dist(\cdot, \partial \Omega)$ is a smooth function with $\bar r|_\Omega<0$, $\bar r|_{\partial \Omega}=0$, $D\bar r|_{\partial \Omega}=\nu$.
$\Omega_\mu:=\{x\in\Omega:d(x,\partial \Omega)<\mu\}$ and $\mu$ is a small positive constant.

We can directly obtain $Q|_{\partial\Omega}=0$. Note that, there exists a positive constant $C_2$ depending on $\mu, |\varphi|_{C^0}, |u|_{C^1}$ and $\partial\Omega$ such that
\begin{eqnarray}\label{431}
Q_{\Omega \backslash {{\Omega}_\mu}}<0, \quad L^{-\frac{1}{2}}\left|[\langle Du,D\bar r \rangle-\varphi(x,u)]\right|\leq \frac{1}{8}.
\end{eqnarray}
when $L> C_2 $. Without loss of generality, we assume that $L> C_2 $, then
we consider two cases for $Q(x_0)=\max \limits_{\bar{\Omega}_\mu} Q $.

\textbf{Subcase 1.1: }$x_0\in \Omega_\mu$. Calculate the partial derivative of $Q $ at $x_0 $
\begin{eqnarray*}
\begin{split}
0=Q_i=&\bigg(1+2L^{-\frac{1}{2}}[\langle Du,D\bar r \rangle-\varphi]\bigg)\bigg(\langle Du,D\bar r \rangle-\varphi\bigg)_i+\frac{1}{2}L \bar r_i
\end{split}
\end{eqnarray*}
we obtain
\begin{eqnarray}\label{432}
\bigg(\langle Du,D\bar r \rangle-\varphi\bigg)_i=-\frac{\frac{1}{2}L \bar r_i}{1+2L^{-\frac{1}{2}}[\langle Du,D\bar r \rangle-\varphi]}.
\end{eqnarray}
Similarly,
\begin{eqnarray}\label{433}
\begin{split}
0\geq& F^{ij}Q_{ij}\\
=&F^{ij}\bigg(1+2L^{-\frac{1}{2}}[\langle Du,D\bar r \rangle-\varphi]\bigg)\bigg(\langle Du,D\bar r \rangle-\varphi\bigg)_{ij}+\frac{1}{2}LF^{ij} \bar r_{ij}\\
&+2F^{ij}L^{-\frac{1}{2}}\bigg(\langle Du,D\bar r \rangle-\varphi\bigg)_i\bigg(\langle Du,D\bar r \rangle-\varphi\bigg)_j\\
=&F^{ij}\bigg(1+2L^{-\frac{1}{2}}[\langle Du,D\bar r \rangle-\varphi]\bigg)\bigg(\langle Du,D\bar r \rangle-\varphi\bigg)_{ij}+\frac{1}{2}LF^{ij} \bar r_{ij}\\
&+\frac{L^{\frac{3}{2}}F^{ij}\bar r_i\bar r_j}{2\bigg(1+2L^{-\frac{1}{2}}[\langle Du,D\bar r \rangle-\varphi]\bigg)^2}
\end{split}
\end{eqnarray}
Since
\begin{eqnarray}\label{434}
\begin{split}
F^{ij}\bigg(\langle Du,D\bar r \rangle-\varphi\bigg)_{ij}
=&F^{ij}(u_{ijl}D_l \bar r+2u_{lj}\bar r_{li}+\bar r_{ijl}D_l u-\varphi_{x_ix_j}\\
&-\varphi_{x_i u}u_j-\varphi_u u_{ij}-\varphi_{ux_j}u_i-\varphi_{uu}u_iu_j)\\
\geq&-C(1+L)\sum_i F^{ii}
\end{split}
\end{eqnarray}
From \eqref{431}, it can be concluded that
\begin{eqnarray}\label{435}
\frac{3}{4}\leq 1+2L^{-\frac{1}{2}}[\langle Du,D\bar r \rangle-\varphi]\leq \frac{5}{4}.
\end{eqnarray}
Combining with \eqref{431}, \eqref{432}, \eqref{433}, \eqref{434},  \eqref{435}, proposition \ref{P4} (\romannumeral3) and proposition \ref{P4} (\romannumeral4), we have
\begin{eqnarray*}
\begin{split}
0\geq&F^{ij}Q_{ij}\\
\geq &-\frac{5}{4}C(1+L)\sum_i F^{ii}+\frac{L^{\frac{3}{2}}F^{ij}\bar r_i\bar r_j}{2\cdot \frac{25}{16}}+\frac{1}{2}F^{ij}L\bar r_{ij}\\
\geq&-C(1+L)\sum_i F^{ii}+C_pL^{\frac{3}{2}}\sum_i F^{ii},
\end{split}
\end{eqnarray*}
hence,
$$ L\leq C.$$
where $C$ is a positive constant depending on $n, k, l, p, |u|_{C^1}, |\varphi|_{C^2}, |\tilde{\psi}|_{C^1}$ and $\Omega$.

\textbf{Subcase 1.2: }$x_0\in \partial\Omega$.
Due to $Q|_{\Omega \backslash {{\Omega}_\mu}}<0$ and $Q|_{\partial \Omega}=0$, then $\underset{\bar{\Omega}}{\max}~Q=Q(x_0)
$. According to Hopf's lemma, at $x_0$ we have
\begin{eqnarray*}
\begin{split}
0\leq& \frac{\partial Q}{\partial \nu}(x_0)\\
=&(D_\nu D_l u D_l \bar r+D_l uD_\nu D_l \bar r-D_\nu \varphi)( 1+2L^{-\frac{1}{2}}[\langle Du,D\bar r \rangle-\varphi])+\frac{1}{2}L\\
\leq&(u_{\nu\nu}+C -D_\nu \varphi)( 1+2L^{-\frac{1}{2}}[\langle Du,D\bar r \rangle-\varphi])+\frac{1}{2}L\\
\leq& -\frac{3}{4}L+\frac{5}{4}C+\frac{1}{2}L+\frac{5}{4}|\varphi_\nu|,
\end{split}
\end{eqnarray*}
we obtain
$$ L\leq C.$$

\textbf{Case 2: }$\underset{\partial \Omega}{\max} |u_{\nu\nu}|=\underset{\partial \Omega}{\sup}~u_{\nu\nu}=u_{\nu\nu}(x_2)=L$.

Similarly, set the following auxiliary function
$$\bar{Q}=\langle Du,D\bar r \rangle-\varphi(x,u)-L^{-\frac{1}{2}}[\langle Du,D\bar r \rangle-\varphi(x,u)]^2-\frac{1}{2}L\bar r,$$
The similar argument works for $\bar{Q}$, we also obtain the conclusion.\\

In summary,
$$\underset{\partial \Omega}{\max} |D_{\nu\nu}u| \leq C.$$
where $C$ is a positive constant depending on $n, k, l, p, |u|_{C^1}, |\varphi|_{C^2}, |\tilde{\psi}|_{C^1}$ and $\Omega$.
\end{proof}

Combining with Theorem \ref{C21}, Theorem \ref{C22} and Theorem \ref{C23},  we have
$$\underset{\bar \Omega}{\sup} |D^2u| \leq C.$$
where~$C$~is a positive constant depending on $n, k, l, p, |u|_{C^1}, |\tilde{\psi}|_{C^2}, |\varphi|_{C^3}$  and $ \Omega$.

\section{Proof of the main theorem}
\begin{proof}
Now we can give the proof of Theorem \ref{T1}. After establishing a priori estimates in Theorem \ref{C0}, Theorem \ref{C11}, Theorem \ref{C12}, Theorem \ref{C21}, Theorem \ref{C22}, Theorem \ref{C23} and Evans-Krylov Theorem, we obtain
$$|u|_{C^{2,\alpha}(\bar \Omega) }\leq C.$$
Applying the method of continuity, we complete the proof of Theorem \ref{T1}.
\end{proof}
\

\end{document}